%%%%%%%%%%%%%%%%%%%%%%%%%%%%%%%%%%%%%%%%%%%%%%%%%%%%%%%%%%%%%%%%%
%%%%%%%%%%%%%%%%%%%%%%%%%%%%%%%%%%%%%%%%%%%%%%%%%%%%%%%%%%%%%%%%%
%%%  This file only differs from t_for_spheres.tex by this    %%%
%%%  header, the deletion of some commented-out lines, and    %%%
%%%  subtracting 12mm rather than 9mm from \topmargin.        %%%
%%%                                                           %%%
%%%  This file was uploaded by PJ to the arXiv on 10/8-2011.  %%%
%%%%%%%%%%%%%%%%%%%%%%%%%%%%%%%%%%%%%%%%%%%%%%%%%%%%%%%%%%%%%%%%%
%%%%%%%%%%%%%%%%%%%%%%%%%%%%%%%%%%%%%%%%%%%%%%%%%%%%%%%%%%%%%%%%%
\documentclass[12pt,reqno]{amsart}

%Package pdfsync enables SHIFT+COMMAND+click syncing between the pdf
%file in Skim and the tex file in Aquamacs
%
\usepackage{pdfsync}

%textcomp gives brackets for power series rings via \textlbrackdbl and
%\textrbrackdbl
\usepackage{textcomp}

%Package amssymb contains many nice mathematical symbols
%
\usepackage{amssymb}

%Uncomment the following line to get nice mathcal letters, by way of
%the command \mathcal{some upper case letters}
%
%\usepackage{eucal}

%Uncomment the following line to get even nicer mathcal letters,
%by way of the command \mathscr{some upper case letters}
%
\usepackage{mathrsfs}

%The following package gives access to lower case blackboard bold,
%by way of the command \mathbbm{some letters}.  Note that these appear
%not to be readable by TeXniscope.
%
%\usepackage{bbm}

%Uncomment the following line to get different(?) mathfrak letters
%
%\usepackage{eufrak}

%Uncomment the following line to get an alternative sort of script
%letters, by way of the command \mathpzc{some letters}
%
\DeclareMathAlphabet{\mathpzc}{OT1}{pzc}{m}{it}

%Package xy gives access to XYpic, a diagram environment
%
\usepackage[all]{xy}

%The following line gives better baseline alignment in XYpic diagrams
%
\entrymodifiers={+!!<0pt,\fontdimen22\textfont2>}

%The following lines makes it possible to doctor the margins
\setlength{\textwidth}{144mm}
\addtolength{\oddsidemargin}{-1.0cm}
\addtolength{\evensidemargin}{-1.0cm}
%The default \textheight is (close to) 206mm.
\setlength{\textheight}{237mm}
\addtolength{\topmargin}{-12mm}

%Font for small sans serif letters

%The following defines shorthands for upper case calligraphic letters
%

%The following defines shorthands for upper case blackboard bold letters
%

\def\BZ{\mathbb{Z}}

%The following defines shorthands for lower case blackboard bold letters
%The line  \usepackage{bbm}  above should be uncommented first
%
% \def\Ba{\mathbbm{a}}
% \def\Bb{\mathbbm{b}}
% \def\Bc{\mathbbm{c}}
% \def\Bd{\mathbbm{d}}
% \def\Be{\mathbbm{e}}
% \def\Bf{\mathbbm{f}}
% \def\Bg{\mathbbm{g}}
% \def\Bh{\mathbbm{h}}
% \def\Bi{\mathbbm{i}}
% \def\Bj{\mathbbm{j}}
% \def\Bk{\mathbbm{k}}
% \def\Bl{\mathbbm{l}}
% \def\Bm{\mathbbm{m}}
% \def\Bn{\mathbbm{n}}
% \def\Bo{\mathbbm{o}}
% \def\Bp{\mathbbm{p}}
% \def\Bq{\mathbbm{q}}
% \def\Br{\mathbbm{r}}
% \def\Bs{\mathbbm{s}}
% \def\Bt{\mathbbm{t}}
% \def\Bu{\mathbbm{u}}
% \def\Bv{\mathbbm{v}}
% \def\Bw{\mathbbm{w}}
% \def\Bx{\mathbbm{x}}
% \def\By{\mathbbm{y}}
% \def\Bz{\mathbbm{z}}

%The following defines shorthands for upper case fraktur letters
%

%The following defines shorthands for lower case fraktur letters
%

%The following defines shorthands for upper case sans serif letters
%
\def\sA{\mathsf{A}}
\def\sB{\mathsf{B}}
\def\sC{\mathsf{C}}
\def\sD{\mathsf{D}}

\def\sH{\mathsf{H}}

\def\sM{\mathsf{M}}
\def\sN{\mathsf{N}}

\def\sS{\mathsf{S}}
\def\sT{\mathsf{T}}

\def\sX{\mathsf{X}}
\def\sY{\mathsf{Y}}

%The following defines shorthands for lower case sans serif letters
%

%\def\ss{\mathsf{s}}

%The following is Peter's mathematical operators
%

\def\add{\operatorname{add}}

%The following line makes ascending dots for use in AR quivers
\def\adots{\mathinner{\mkern1mu\raise1.0pt\vbox{\kern7.0pt\hbox{.}}\mkern2mu\raise4.0pt\hbox{.}\mkern2mu\raise7.0pt\hbox{.}\mkern1mu}}

\def\D{\sD}
\def\Dc{\D^{\operatorname{c}}}

\def\Df{\D^{\operatorname{f}}}
\def\dim{\operatorname{dim}}

\def\gr{\mathsf{gr}}

\def\H{\operatorname{H}}

\def\Hom{\operatorname{Hom}}
\def\id{\operatorname{id}}

\def\Ker{\operatorname{Ker}}

\newcommand\LTensor[1]{\overset{{\rm L}}{\underset{#1}{\otimes}}}

\def\opp{\operatorname{op}}

\def\RHom{\operatorname{RHom}}

%Let equation numbers have the form (1.a), (1.b), (2.a), ...
%
%\numberwithin{equation}{section}
%\renewcommand{\theequation}{\arabic{section}.\alph{equation}}

%Let subsection numbers have the form (1.a), (1.b), ..., (2.a), (2.b), ...
%

%Use Roman numbering in enumerate environment
%

%The following defines environments for Theorems, Lemmas, etc.
%
\newtheorem{Lemma}{Lemma}[section]
%For simple numbering of Theorems, Lemmas etc., uncomment the
%following line
%\renewcommand{\theLemma}{\arabic{Lemma}}

\newtheorem{Proposition}[Lemma]{Proposition}

\theoremstyle{definition}
\newtheorem{Definition}[Lemma]{Definition}

\newtheorem{Remark}[Lemma]{Remark}

\begin{document}

\setlength{\parindent}{0pt}
\setlength{\parskip}{7pt}
%The default \baselineskip is close to 4.8mm
%\setlength{\baselineskip}{5.3mm}

\title[t-structures and CY dimension]{Sparseness of t-structures and
negative Calabi--Yau dimension in triangulated categories generated by
a spherical object}

\author{Thorsten Holm}
\address{Institut f\"{u}r Algebra, Zahlentheorie und Diskrete
  Mathematik, Fa\-kul\-t\"{a}t f\"{u}r Ma\-the\-ma\-tik und Physik, Leibniz
  Universit\"{a}t Hannover, Welfengarten 1, 30167 Hannover, Germany}
\email{holm@math.uni-hannover.de}
\urladdr{http://www.iazd.uni-hannover.de/\~{ }tholm}

\author{Peter J\o rgensen}
\address{School of Mathematics and Statistics,
Newcastle University, Newcastle upon Tyne NE1 7RU, United Kingdom}
\email{peter.jorgensen@ncl.ac.uk}
\urladdr{http://www.staff.ncl.ac.uk/peter.jorgensen}

\author{Dong Yang}
\email{dongyang2002@googlemail.com}

%\author{Next author goes here}
%\address{Next author's postal address goes here}
%\email{Next author's mail address goes here}
%\urladdr{}

%\thanks{Date: \today. A thank you would go here}

\keywords{Auslander-Reiten theory, co-t-structure, compact object,
  derived category, Differential Graded algebra, Differential Graded
  module, Serre duality, Serre functor, silting object, silting
  subcategory, t-structure, torsion pair}

\subjclass[2010]{16E35, 16S90, 18E30, 18E40}
%13D25: Complexes
%16E10: Homological dimension
%16E35: Derived categories
%16E45: Differential graded algebras and applications
%16G10: Representations of Artinian rings 
%16G60: Representation type (finite, tame, wild, etc.) 
%16G70: Auslander-Reiten sequences (almost split sequences) and
%       Auslander-Reiten quivers
%16S90: Torsion theories; radicals on module categories
%18E30: Derived categories, triangulated categories
%18E35: Localization of categories
%18E40: Torsion theories, radicals
%18G05: Projectives and injectives
%18G35: Chain complexes
%18G99: Homological algebra: None of the above, but in this section 
%55P62: Rational homotopy theory

\begin{abstract} 

Let $k$ be an algebraically closed field and let $\sT$ be the
$k$-linear algebraic triangulated category ge\-ne\-ra\-ted by a
$w$-spherical object for an integer $w$.  For certain values of $w$
this category is classical.  For instance, if $w = 0$ then it is the
compact derived category of the dual numbers over $k$.

As main results of the paper we show that for $w \leq 0$, the category
$\sT$ has no non-trivial t-structures, but does have one family of
non-trivial co-t-structures, whereas for $w \geq 1$ the opposite
statement holds.

Moreover, without any claim to originality, we observe that for $w
\leq -1$, the category $\sT$ is a candidate to have negative
Calabi--Yau dimension since $\Sigma^w$ is the unique power of the
suspension functor which is a Serre functor.

\end{abstract}

\maketitle

\setcounter{section}{-1}
\section{Introduction}
\label{sec:introduction}

Let $k$ be an algebraically closed field, $w$ an integer, and let
$\sT$ be a $k$-linear algebraic triangulated category which is
idempotent complete and classically generated by a $w$-spherical
object.

The categories $\sT$, examined initially in \cite{J} for $w \geq 2$,
have recently been of considerable interest, see \cite{FY},
\cite{HJ}, \cite{KYZ}, \cite{Ng}, and \cite{ZZ}.  The purpose of this
paper is twofold.

First, we show the following main result.

{\bf Theorem A. }
{\em
If $w \leq 0$, then $\sT$ has no non-trivial t-structures.  It has
one family of non-trivial co-t-structures, all of which are
(de)suspensions of a canonical one.

If $w \geq 1$, then $\sT$ has no non-trivial co-t-structures.  It has
one family of non-trivial t-structures, all of which are
(de)suspensions of a canonical one.
}

For $w \leq 0$ this is a particularly clean instance of Bondarko's
remark \cite[rmk.\ 4.3.4.4]{B} that there are sometimes ``more''
co-t-structures than t-structures in a triangulated category.
Note that the case $w = 2$ is originally due to Ng \cite[thms.\ 4.1
and 4.2]{Ng}.

Secondly, without any claim to originality, we observe that if $w \leq
-1$ then $\sT$ is a candidate for having negative Calabi--Yau
dimension, although there does not yet appear to be a universally
accepted definition of this concept.  Namely, the $w$'th power of the
suspension functor, $\Sigma^w$, is a Serre functor for $\sT$, and
$\Sigma^w$ is the {\em only} power of the suspension which is a Serre
functor.  For $w \geq 2$ this is contained in \cite[prop.\ 6.5]{J}.
For a general $w$ it is well known to the experts; we show an easy
proof in Proposition \ref{pro:Serre}.

The proof of Theorem A occupies Section \ref{sec:proof} while Sections
\ref{sec:T} to \ref{sec:remarks} are preparatory.  Let us end the
introduction by giving some background and explaining the terms used
above.

\subsection{What is $\sT$?}

$\,$
\medskip
\newline
For certain small values of $w$, the category $\sT$ is well known in
different guises: For $w = 0$ it is $\Dc(k[X] / (X^{2}))$, the compact
derived category of the dual numbers.  For $w = 1$ it is
$\Df(k\mbox{\textlbrackdbl} X \mbox{\textrbrackdbl})$, the derived
category of complexes with bounded finite length homology over the
formal power series ring.  And for $w = 2$ it is the cluster category
of type $A_{\infty}$, see \cite{HJ}.  For $w$ negative, $\sT$ is less
classical.

In general, $\sT$ is determined up to triangulated equivalence by the
properties stated in the first paragraph of the paper by \cite[thm.\
2.1]{KYZ}.  We briefly explain these properties:

A triangulated category is algebraic if it is the stable category of a
Frobenius category; see \cite[sec.\ 9]{H}.

An additive category $\sA$ is idempotent complete if, for each
idempotent $e$ in an endomorphism ring $\sA( a , a )$, we have $e =
\iota\pi$ where $\iota$ and $\pi$ are the inclusion and projection of
a direct summand of $a$.  Note that $\sA( - , - )$ is shorthand for
$\Hom_{\sA}( - , - )$.

A $w$-spherical object $s$ in a $k$-linear triangulated category $\sS$
is defined by having graded endomorphism algebra 
$\sS( s , \Sigma^* s )$ isomorphic to $k[X]/(X^2)$ with $X$ placed in
cohomological degree $w$.

A triangulated category $\sS$ is classically generated by an object
$s$ if each object in $\sS$ can be built from $s$ using finitely many
(de)suspensions, distinguished triangles, and direct summands.

\subsection{t-structures and co-t-structures}

$\,$
\medskip
\newline
To explain these, we first introduce the more fundamental notion of a
torsion pair in a triangulated category due to Iyama and Yoshino
\cite[def.\ 2.2]{IY}. 

If $\sS$ is a triangulated category, then a torsion pair in $\sS$ is a
pair $( \sM , \sN )$ of full subcategories closed under direct sums
and summands, satisfying that $\sS( \sM , \sN ) = 0$ and that $\sS =
\sM * \sN$ where $\sM * \sN$ stands for the class of objects $s$
appearing in distinguished triangles $m \rightarrow s \rightarrow n$
with $m \in \sM$, $n \in \sN$.

A torsion pair $( \sM , \sN )$ is called a t-structure if $\Sigma \sM
\subseteq \sM$, and a co-t-structure if $\Sigma^{-1}\sM \subseteq
\sM$.  In each case, the structure is called trivial if it is
$( \sS , 0 )$ or $( 0 , \sS )$ and non-trivial otherwise.

This is not how t-structures and co-t-structures were first defined by
Beilinson, Bernstein, and Deligne in \cite[def.\ 1.3.1]{BBD},
respectively by Bondarko and Pauksztello in \cite[def.\ 1.1.1]{B} and
\cite[def.\ 2.4]{P}, but it is an economical way to present them and
to highlight their dual natures.

t-structures have become classical objects of homological algebra
while co-t-structures were introduced more recently.  They both enable
one to ``slice'' objects of a triangulated category into simpler bits
and they are the subject of vigorous research.

\subsection{Silting subcategories}

$\,$
\medskip
\newline
We are grateful to Chang\-ji\-an Fu for the following observation:
$\Sigma^w$ is a Serre functor of $\sT$.  In the terminology of
\cite{AI} this means that $\sT$ is $w$-Calabi--Yau.  Moreover, $\sT$
is generated by a $w$-spherical object $s$; in particular, for $w \leq
-1$ we have $\sT( s , \Sigma^{>0}s ) = 0$.  In the terminology of
\cite{AI}, this means that $s$ is a silting object.

So for $w \leq -1$, the category $\sT$ is $w$-Calabi--Yau with the
silting subcategory $\add(s)$.  The existence of a category with these
properties was left as a question at the end of \cite[sec.\ 2.1]{AI}.

It is not hard to check directly that for $w \leq -1$, the basic
silting objects in $\sT$ are precisely the (de)suspensions of $s$.
This also follows from \cite[thm.\ 2.26]{AI}.

\section{Basic properties of $\sT$}
\label{sec:T}

None of the material of this section is original, but not all of it is
given explicitly in the original references \cite{FY}, \cite{J}, and
\cite{KYZ}.  We give a brief, explicit presentation to facilitate
the rest of the paper.

\begin{Remark}
\label{rmk:Krull-Schmidt}
The category $\sT$ is Krull-Schmidt by \cite[p.\ 52]{R}.  Namely, it
is idempotent complete by assumption, and it has finite dimensional
$\Hom$ spaces because each object is finitely built from a
$w$-spherical object $s$ which in particular satisfies $\dim_k \sT( s
, \Sigma^i s ) < \infty$ for each $i$.
\end{Remark}

We need to compute inside $\sT$.  Hence a concrete model is more
useful than an abstract characterisation.  Let us redefine $\sT$ as
such a model.

\begin{Definition}
Set $d = w - 1$ and consider the polynomial ring $k[T]$ as a
Differential Graded (DG) algebra with $T$ in homological degree $d$
and zero differential.  We denote this DG algebra by $A$.

Consider $\sD(A)$, the derived category of DG left-$A$-modules, and
let $\sT$ be $\langle k \rangle$, the thick sub\-ca\-te\-go\-ry
generated by the trivial DG module $k = A / (T)$ where $(T)$ is the DG
ideal generated by $T$.
\end{Definition}

This is how $\sT$ will be defined for the rest of the paper, except in
the proof of Proposition \ref{pro:Serre}.  It is compatible with the
previous definition of $\sT$ by the following result.

\begin{Lemma}
The category $\sT = \langle k \rangle$ is a $k$-linear algebraic
triangulated category which is idempotent complete and classically
generated by the $w$-spherical object $k$.
\end{Lemma}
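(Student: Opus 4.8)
The plan is to get all properties of $\sT$ except the $w$-sphericity of $k$ for free from the fact that $\sT=\langle k\rangle$ is a thick subcategory of $\sD(A)$, and to prove sphericity by computing $\Ext^*_A(k,k)$ from the Koszul resolution of $k$. For the soft part: $\sD(A)$, the derived category of the DG $k$-algebra $A$, is a $k$-linear triangulated category; it is idempotent complete, having all small coproducts; and it is algebraic, as is well known for derived categories of DG algebras (cf.\ \cite[\S9]{H}; $\sD(A)$ is, up to equivalence, a thick subcategory of the homotopy category of DG $A$-modules, the latter being the stable category of the Frobenius category of DG $A$-modules with degreewise split short exact sequences). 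Hence $\sT=\langle k\rangle$, being a thick subcategory of $\sD(A)$, is again $k$-linear and triangulated; it is idempotent complete, being closed under direct summands inside the idempotent complete $\sD(A)$; it is algebraic, since a thick subcategory of an algebraic triangulated category is again algebraic; and it is classically generated by $k$ by the definition of $\langle-\rangle$. So it remains only to show that $k$ is $w$-spherical, i.e.\ that the graded algebra $\sT(k,\Sigma^*k)$ is isomorphic to $k[X]/(X^2)$ with $X$ in cohomological degree $w$; and since $\sT$ is a full subcategory of $\sD(A)$ closed under $\Sigma^{\pm1}$, this algebra equals $\sD(A)(k,\Sigma^*k)$, so it is a computation inside $\sD(A)$.

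For that computation I would use the finite semifree resolution of $k$ given by the Koszul DG module $P=A\oplus A\varepsilon$, where $\varepsilon$ has homological degree $d+1=w$ and the differential is determined by $\partial\varepsilon=T$, $\partial|_A=0$. The augmentation $P\to k$ (sending $1\mapsto 1$, $\varepsilon\mapsto 0$) is a quasi-isomorphism, since the homology of $P$ is the cokernel $A/(T)=k$ of multiplication by $T$ on $A$, placed in homological degree $0$ (the kernel of multiplication by $T$ vanishes, $A$ being a domain). Therefore $\sD(A)(k,\Sigma^ik)=\H^i\Hom_A(P,k)$, and the differential of the complex $\Hom_A(P,k)$ is induced by multiplication by $T$, which annihilates $k$ and hence is zero; so $\Hom_A(P,k)=k\cdot 1^*\oplus k\cdot\varepsilon^*$ with $1^*=\id_k$ in cohomological degree $0$ and $\varepsilon^*$ in cohomological degree $w$. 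This gives $\dim_k\sT(k,\Sigma^ik)=1$ for $i\in\{0,w\}$ and $0$ otherwise.

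It remains to check that $X:=\varepsilon^*$ satisfies $X^2=0$. When $w\neq 0$ this is automatic, as $X^2$ lies in $\sT(k,\Sigma^{2w}k)=0$. When $w=0$ the graded algebra is concentrated in degree $0$ and equals $\End_\sT(k)$, a two-dimensional $k$-algebra that is local (it has no nontrivial idempotents, $k$ being indecomposable in $\sD(A)$ since any complement would be acyclic, hence zero; alternatively use Krull--Schmidt via Remark~\ref{rmk:Krull-Schmidt} and \cite[p.~52]{R}), hence isomorphic to $k[X]/(X^2)$ because $k$ is algebraically closed. Uniformly in $w$, one may instead invoke that $\Ext^*_A(k,k)$ is the Koszul dual of the one-variable polynomial algebra $A=k[T]$, namely the exterior algebra on a single generator. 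I expect the main obstacle to be the degree bookkeeping: one must keep the homological versus cohomological conventions and the suspension shift built into $P$ straight in order to be certain that the extra $\Ext$-class lies in cohomological degree exactly $w=d+1$, which is precisely what the definition of a $w$-spherical object requires.
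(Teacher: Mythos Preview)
Your proposal is correct and takes essentially the same approach as the paper: your Koszul resolution $P=A\oplus A\varepsilon$ with $\partial\varepsilon=T$ is precisely the mapping cone of $\Sigma^d A\stackrel{\cdot T}{\longrightarrow}A$, so computing $\H^*\Hom_A(P,k)$ is exactly what the paper does when it applies $\RHom_A(-,k)$ to the distinguished triangle $\Sigma^d A\to A\to k$ and reads off the long exact sequence. Your version is simply more explicit, in particular in verifying the algebra structure (the vanishing of $X^2$, including the separate argument for $w=0$), which the paper's terse proof leaves to the reader.
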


\begin{proof}
The only part which is not clear is that $k$ is $w$-spherical.  But
there is a distinguished triangle
\begin{equation}
\label{equ:k}
  \Sigma^d A \stackrel{\cdot T}{\longrightarrow} A \longrightarrow k
\end{equation}
in $\sD(A)$, induced by the corresponding
short exact sequence of DG modules.  Applying $\RHom_A(-,k)$ gives
another distinguished triangle whose long exact homology sequence
shows that $k$ is a $w$-spherical object of $\sD(A)$.
\end{proof}

\begin{Remark}
The distinguished triangle \eqref{equ:k} also shows that $k$ is a
compact object of $\sD(A)$, so $\sT$ is even a subcategory of the
compact derived category $\Dc(A)$.
\end{Remark}

\begin{Definition}
For each $r \geq 0$, the element $T^{r+1}$ of $A$ generates a DG ideal
$(T^{r + 1})$.  Consider the quotient $X_{r} = A / (T^{r + 1})$ as a
DG left-$A$-module.
\end{Definition}

\begin{Remark}
\label{rmk:Xr}
There is a distinguished triangle
\[
  \Sigma^{(r+1)d} A
  \stackrel{\cdot T^{r+1}}{\longrightarrow} A
  \longrightarrow X_r
\]
in $\Dc(A)$, induced by the corresponding short exact sequence of DG
modules. 
\end{Remark}

\begin{Proposition}
\label{pro:Xr}
The indecomposable objects of $\sT$ are precisely the
(de)sus\-pen\-si\-ons of the objects $X_r$.
\end{Proposition}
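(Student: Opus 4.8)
The plan is to prove three things and assemble them using that $\sT$ is Krull--Schmidt (Remark \ref{rmk:Krull-Schmidt}): (a) every $X_r$ lies in $\sT$; (b) every $X_r$ is indecomposable; (c) every object of $\sT$ is a finite direct sum of (de)suspensions of the $X_r$. Granting these, an indecomposable object of $\sT$ is by (c) a sum of (de)suspensions of the $X_r$, hence by (b) a single such (de)suspension, while conversely each (de)suspension of an $X_r$ lies in $\sT$ by (a) and is indecomposable by (b).

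For (a) I would induct on $r$, starting from $X_{0}=k$. The sub-DG-module $(T^{r})/(T^{r+1})$ of $X_r$ is killed by $T$ and generated in homological degree $rd$, so it is isomorphic to $\Sigma^{rd}k$, with quotient $X_{r-1}$; the corresponding short exact sequence of DG modules yields a distinguished triangle $\Sigma^{rd}k\to X_r\to X_{r-1}$, and $X_r\in\sT$ follows. For (b) I would apply $\RHom_A(-,X_r)$ to the triangle $\Sigma^{(r+1)d}A\stackrel{\cdot T^{r+1}}{\longrightarrow}A\to X_r$ of Remark \ref{rmk:Xr}: since $T^{r+1}$ acts as zero on $X_r$, the induced map on $\RHom$ vanishes, and the triangle splits to give $\RHom_A(X_r,X_r)\cong X_r\oplus\Sigma^{-(r+1)d-1}X_r$ in $\sD(A)$. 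Taking degree-$0$ morphisms, $\End_{\sT}(X_r)$ is an extension of $k$ (the identity; enlarged to the local ring $k[T]/(T^{r+1})$ when $w=1$) by the ideal $\Ext^{1}_A(X_r,X_r)$ in internal degree $0$, and the latter is square-zero because $A$ is a graded polynomial ring in one variable, so that $\Ext^{\geq 2}_A=0$. In each case $\End_{\sT}(X_r)$ is a finite-dimensional local $k$-algebra ($k$, or $k[\epsilon]/(\epsilon^{2})$, or $k[T]/(T^{r+1})$ according to whether $w\notin\{0,1\}$, $w=0$, or $w=1$), so $X_r$ is indecomposable.

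The real content is (c). An object $M$ of $\sT$ is finitely built from $k$, hence has finite-dimensional total homology, and $\H_{*}(M)$ is then a finite-dimensional graded module over $A=k[T]$ on which $T$ acts nilpotently (the objects with these two properties form a thick subcategory of $\sD(A)$ containing $k$). By the structure theorem for finitely generated graded modules over the graded principal ideal domain $k[T]$, $\H_{*}(M)\cong\bigoplus_{l}\Sigma^{a_l}X_{c_l}$ as graded $A$-modules. It remains to lift this to an isomorphism in $\sD(A)$, i.e.\ to prove that $M$ is formal. This holds because $\Ext^{\geq 2}_A$ vanishes: a distinguished triangle realising $\H_{*}(M)$ as an extension of one of its graded summands by the remaining ones has connecting morphism in a group of the form $\Ext^{2}_A(-,-)=0$, hence splits, and iterating gives $M\cong\bigoplus_{l}\Sigma^{a_l}X_{c_l}$ in $\sD(A)$, which is exactly (c).

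The step I expect to be the main obstacle is this formality in (c) --- upgrading the homology-level splitting to a splitting in $\sD(A)$. A hands-on route is to use the length-one free resolution of $X_c$ underlying Remark \ref{rmk:Xr} to lift a chosen homogeneous generating cycle of each graded summand of $\H_{*}(M)$ to a morphism $\Sigma^{a_l}X_{c_l}\to M$, collect these into a single morphism $\bigoplus_{l}\Sigma^{a_l}X_{c_l}\to M$, and verify by a dimension count that it induces an isomorphism on homology. Alternatively one can sidestep formality by proving directly that the additive closure $\cX$ of $\{\Sigma^{i}X_r\}$ is a triangulated subcategory: Remark \ref{rmk:Xr} computes each $\RHom_A(X_r,X_s)$ and exhibits every morphism among (de)suspensions of the $X_r$ as, modulo a split-off summand in Ext-degree $1$, an honest morphism of graded $k[T]$-modules, whose kernel, cokernel and extensions are --- again by the structure theorem --- direct sums of (de)suspensions of the $X_r$; so cones stay in $\cX$, making $\cX$ thick, whence $\cX\supseteq\langle k\rangle=\sT$.
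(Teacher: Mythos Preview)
Your plan is correct and complete; the route differs from the paper's. The paper works in the larger category $\Dc(A)$ and invokes \cite[thm.\ 3.6]{KYZ}: since $\gr(k[T]^{\opp})$ is hereditary, the functor $\H^*(-)=\Hom_{\Dc(A)}(A,\Sigma^*(-))$ gives a bijection between isomorphism classes of indecomposable objects of $\Dc(A)$ and of $\gr(k[T]^{\opp})$; it then lists the indecomposables of $\gr(k[T]^{\opp})$ (separately for $d\neq 0$ and $d=0$) and checks which of the corresponding objects of $\Dc(A)$ lie in $\sT$. You instead stay inside $\sT$ throughout and prove formality directly. Both arguments rest on the same fact---graded global dimension one for $k[T]$---but the paper outsources it to a citation while yours is self-contained and makes the mechanism visible. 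The paper's detour through $\Dc(A)$ also yields the indecomposables there as a byproduct, which you do not need; conversely, your step (a) and the nilpotency of $T$ on $\H_*(M)$ are facts the paper uses implicitly (``of these objects, precisely the $X_r$ are in $\sT$'') without spelling out.

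One remark on (c): your first formality argument, via ``a distinguished triangle realising $\H_*(M)$ as an extension \ldots\ with connecting morphism in $\Ext^2_A$'', is not well-posed as written. You have not yet produced a triangle in $\sD(A)$ with $M$ in the middle and pieces of $\H_*(M)$ on the outside, and if you write the split triangle for $\H_*(M)$ itself the connecting map sits in $\Ext^1$, not $\Ext^2$; either way this presupposes what is to be proved. You were right to flag this as the delicate step. Your ``hands-on route'' is the correct fix and is the standard proof that DG modules over a graded ring of global dimension one are formal: lift a generator of each cyclic summand $\Sigma^{a_l}X_{c_l}$ of $\H_*(M)$ to a cycle $z_l\in M$, observe that $T^{c_l+1}z_l$ is a boundary so the map $\Sigma^{a_l}A\to M$, $1\mapsto z_l$, descends through $\Sigma^{a_l}X_{c_l}$ in $\sD(A)$ via the triangle of Remark~\ref{rmk:Xr}, and sum these maps to obtain a quasi-isomorphism $\bigoplus_l \Sigma^{a_l}X_{c_l}\to M$. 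Lead with that argument and drop the $\Ext^2$ sketch.
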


\begin{proof}
Note that $\Dc(A) = \langle A \rangle$ and that
$\Hom_{\Dc(A)}(A,\Sigma^* A)$ is isomorphic to $k[T]$ as a graded
algebra, where $T$ is still in homological degree $d$.  Since
$\gr(k[T]^{\opp})$, the abelian category of finitely generated graded
right-$k[T]$-modules and graded homomorphisms, is hereditary,
\cite[thm.\ 3.6]{KYZ} says that the functor
\[
  \Hom_{\Dc(A)}(A,\Sigma^*(-)) = \H^*(-)
\]
induces a bijection between the isomorphism classes of indecomposable
objects of $\Dc(A)$ and $\gr(k[T]^{\opp})$.  This has the following
consequences.

If $w \neq 1$ then $d \neq 0$.  Then up to isomorphism, the
indecomposable objects of $\gr(k[T]^{\opp})$ are precisely the graded
shifts of the graded modules $k[T]$ and $k[T] / (T^{r+1})$ for $r \geq
0$.  So up to isomorphism, the indecomposable objects of $\Dc(A)$ are
the (de)suspensions of $A$ and the objects $X_r$ for $r \geq 0$.

Of these objects, precisely the $X_{r}$ are in $\sT$, so up to
isomorphism the indecomposable objects of $\sT$ are the
(de)suspensions of the objects $X_r$ for $r \geq 0$.

If $w = 1$ then $d = 0$ so $A$ and $k[T]$ are concentrated in degree
$0$.  A graded right-$k[T]$-module is the direct sum of its graded
components, and it follows that the indecomposable objects of
$\gr(k[T]^{\opp})$ are the indecomposable ungraded
right-$k[T]$-modules placed in a single graded degree.  But up to
isomorphism, these are $k[T]$ and $k[T] / (f(T))$ where $f(T)$ is a
power of an irreducible, hence first degree, polynomial.  So up to
isomorphism, the indecomposable objects of $\Dc(A)$ are the
(de)suspensions of the objects $A$ and $A / (f(T))$ viewed in
$\Dc(A)$.

Again, of these objects, precisely the $X_{r}$ are in $\sT$, so up to
isomorphism the indecomposable objects of $\sT$ are the
(de)suspensions of the objects $X_r$ for $r \geq 0$.
\end{proof}

It is not hard to see that $A$ is the $w$-Calabi--Yau completion of
$k$ in the sense of \cite[4.1]{K}.  As a consequence, $\sT = \langle k
\rangle$ has Serre functor $S = \Sigma^w$. Here we give a direct proof
of this fact.

\begin{Proposition}
\label{pro:Serre}
The category $\sT$ has Serre functor $S = \Sigma^w$, and this is the
only power of the suspension which is a Serre functor.
\end{Proposition}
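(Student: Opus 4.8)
The plan is to compute the relevant Hom-spaces explicitly using the concrete DG model $A = k[T]$, and to identify the Serre functor via the defining natural isomorphism $\sT(x,y) \cong D\sT(y, Sx)$ on the generators.

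First I would verify that $\Sigma^w$ is a Serre functor. Since $\sT = \langle k \rangle$ is classically generated by the $w$-spherical object $k$, it suffices by a standard dévissage argument to check the natural isomorphism $\sT(x, k) \cong D\sT(k, \Sigma^w x)$ for all $x \in \sT$; because both sides are cohomological functors in $x$ agreeing on the generator, they agree on all of $\sT$. For $x = k$ this is the statement that $\sT(k, \Sigma^i k) \cong D\sT(k, \Sigma^{w-i} k)$, which follows immediately from the $w$-spherical condition: the graded endomorphism algebra is $k[X]/(X^2)$ with $X$ in degree $w$, so $\sT(k, \Sigma^i k)$ is one-dimensional for $i \in \{0, w\}$ and zero otherwise, and the pairing $i \leftrightarrow w - i$ swaps these two degrees. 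One then checks functoriality (the isomorphism is compatible with morphisms), which is the routine part of the dévissage; it can also be obtained from the identification of $A$ with the $w$-Calabi--Yau completion of $k$ as mentioned in the remark preceding the statement.

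Next I would prove uniqueness: if $\Sigma^v$ is also a Serre functor for some integer $v$, then $v = w$. Since a Serre functor is unique up to natural isomorphism when it exists, it is enough to show $\Sigma^v \cong \Sigma^w$ as functors forces $v = w$, equivalently that $\Sigma^{v-w} \cong \id_{\sT}$ forces $v = w$. Setting $n = v - w$, suppose $\Sigma^n \cong \id_{\sT}$. Apply this to the generator $k$: we would get $\sT(k, \Sigma^i k) \cong \sT(k, \Sigma^{i+n} k)$ for all $i$, but the graded space $\bigoplus_i \sT(k, \Sigma^i k) \cong k[X]/(X^2)$ is nonzero only in degrees $0$ and $w$. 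If $n \neq 0$, translating by $n$ cannot preserve this two-element support set $\{0, w\}$ unless $w = 0$ and... — here I need to be slightly careful. If $w \neq 0$, the support $\{0, w\}$ has exactly two elements and is not invariant under any nonzero translation, so $n = 0$. If $w = 0$, then $k$ is $0$-spherical and the argument via $k$ alone is not enough; but then I would instead use a larger object, e.g. $X_r$ with $r \geq 1$, and examine $\H^*(X_r) \cong k[T]/(T^{r+1})$ which (when $d = w - 1 = -1 \neq 0$) is supported in degrees $0, -1, \ldots, -r$, an interval of length $r+1 \geq 2$ that again pins down $n = 0$. Actually since the statement only claims uniqueness and $w$ ranges over all integers, I should organize this uniformly: for any $w$, the indecomposable $X_1$ has homology $k[T]/(T^2)$ supported in degrees $\{0, -d\} = \{0, 1-w\}$ for $w \neq 1$ (two distinct degrees), and for $w = 1$ one uses that $\sT$ is not $\id$-periodic because $\Dc(A) = \Dc(k[T])$ has, e.g., non-periodic $\Hom$ between $A$ and $X_0$. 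In all cases a nonzero translation is detected on homology of some object, so $n = 0$ and $v = w$.

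The main obstacle I anticipate is packaging the uniqueness argument cleanly across all values of $w$ in one stroke rather than splitting into cases $w \neq 1$ versus $w = 1$ and $w \neq 0$ versus $w = 0$. The cleanest route is probably: $\Sigma^n \cong \id_{\sT}$ would descend to an autoequivalence of $\Dc(A)$-data via the Hom-functor $\H^*$ from Proposition \ref{pro:Xr}, giving a degree-$n$ self-equivalence of $\gr(k[T]^{\opp})$ fixing isoclasses; but graded shift by $n$ on $\gr(k[T]^{\opp})$ is the identity on isoclasses only when $n = 0$ (compare $k[T]$ with its shift $k[T](n)$, which are non-isomorphic graded modules for $n \neq 0$ when $d \neq 0$; when $d = 0$, i.e. $w = 1$, the graded shift genuinely is trivial on $\gr$, so here one must argue directly in $\sT$, noting that $\sT \subseteq \Dc(k[T])$ and $\Sigma^n \not\cong \id$ on $\Dc(k[T])$ for $n \neq 0$ since it moves $A$ off itself). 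Getting the $w = 1$ corner right without circularity is the only genuinely delicate point; everything else is a short computation.
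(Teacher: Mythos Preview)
Your outline is sound but diverges from the paper in a way that creates extra work and leaves one genuine soft spot.

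\textbf{Existence.} The paper switches models, taking $B = k[U]/(U^2)$ with $U$ in cohomological degree $w$, so that $\sT \simeq \Dc(B)$. A short chain of standard adjunction isomorphisms then shows that $DB \LTensor{B} -$ is a right Serre functor of $\Dc(B)$; naturality is automatic because every step is a canonical natural isomorphism, and a direct computation gives $DB \cong \Sigma^w B$ as DG bimodules. Your d\'evissage in the $A$-model, by contrast, only verifies that the \emph{dimensions} of $\sT(x,k)$ and $D\sT(k,\Sigma^w x)$ agree on the generator; it does not produce a natural transformation between the two cohomological functors, and without one the five-lemma propagation never gets started. You acknowledge this and fall back on the Calabi--Yau completion remark, but the paper's proof is explicitly offered as a direct argument bypassing that citation. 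As written, your existence half has a real gap unless you actually construct the natural transformation (for instance via a trace $\sT(k,\Sigma^w k)\to k$ and the associated composition pairing, and then check non-degeneracy).

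\textbf{Uniqueness.} Here your argument is correct but needlessly tangled. The paper's version is one line: $\Sigma^{i-w}\simeq\id$ gives $\Sigma^{i-w}B\cong B$, and taking homology forces $i=w$, uniformly in $w$. The same one-liner is available in your model without any case split: $\Sigma^n k \cong k$ already forces $n=0$ since $\H_*(k)$ sits in the single degree $0$. Your worry about $w=0$ is unfounded (the support $\{0,w\}$ of $\sT(k,\Sigma^*k)$ collapses to the singleton $\{0\}$, which is still not translation-invariant), and the detour through $X_r$ and $\gr(k[T]^{\opp})$ for $w=1$ is unnecessary.
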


\begin{proof}
For this proof only, it is convenient to use another model for $\sT$.
Consider the dual numbers $k[U] / (U^2)$ and view them as a DG algebra
with $U$ placed in cohomological degree $w$ and zero differential.
Denoting this DG algebra by $B$, it is immediate that $B$ is a
$w$-spherical object of $\sD(B)$, the derived category of DG
left-$B$-modules, and so the thick subcategory $\langle B \rangle$
generated by $B$ is equivalent to $\sT$.  This is the model we will
use.  In fact, $\langle B \rangle$ is equal to the compact derived
category $\Dc(B)$.

For $X, Y \in \Dc(B)$ we have the following natural isomorphisms
where $D(-) = \Hom_k(-,k)$. 
\begin{align*}
  D \RHom_B( Y , DB \LTensor{B} X )
  & \stackrel{\rm (a)}{\cong} D \big( \RHom_B( Y , DB ) \LTensor{B} X \big) \\
  & \stackrel{\rm (b)}{\cong} \RHom_{B^{\opp}} \big( \RHom_B( Y , DB ) , DX \big) \\
  & \stackrel{\rm (c)}{\cong} \RHom_{B^{\opp}}( DY , DX ) \\
  & \stackrel{\rm (d)}{\cong} \RHom_B( X , Y ).
\end{align*}
Here (a) holds for $X = B$ and hence for the given $X$ because it is
finitely built from $B$.  The isomorphisms (b) and (c) are by
adjointness of $\LTensor{}$ and $\RHom$.  And (d) is duality.

Taking zeroth homology of the above formula shows that $DB \LTensor{B}
-$ is a right Serre functor of $\Dc(B)$.  But direct computation shows
$DB \cong \Sigma^w B$ as DG $B$-bimodules, so $\Sigma^w$ is a right
Serre functor.  Since it is an equivalence of categories, it is even a
Serre functor.

Finally, no other power of $\Sigma$ is a Serre functor of $\Dc(B)$:
If $\Sigma^i$ is a Serre functor then $\Sigma^i \simeq \Sigma^w$
whence $\Sigma^{i-w} \simeq \id$.  This implies $i = w$ since already
$\Sigma^{i-w} B \cong B$ implies $i = w$ as one sees by taking
homology. 
\end{proof}

\begin{Remark}
\label{rmk:tau}
The AR translation of $\sT$ is $\tau = \Sigma^{-1}S = \Sigma^{w - 1} =
\Sigma^d$.
\end{Remark}

\begin{Proposition}
\label{pro:ARquiver}
\begin{enumerate}

  \item  If $w \neq 1$ then the AR quiver of $\sT$ consists of $|d|$
  copies of $\BZ A_{\infty}$.  One copy is shown in Figure \ref{fig:3}
  and the others are obtained by applying $\Sigma$, $\Sigma^2$,
  $\ldots$, $\Sigma^{|d|-1}$.

\smallskip

  \item  If $w = 1$ then the AR quiver of $\sT$ consists of countably
  many homogeneous tubes.  One tube is shown in Figure \ref{fig:4} and
  the others are obtained by applying all non-zero powers of $\Sigma$.

\end{enumerate}

\begin{figure}
\[
  \xymatrix @-3.5pc @! {
    & \vdots \ar[dr] & & \vdots \ar[dr] & & \vdots \ar[dr] & & \vdots \ar[dr] & & \vdots \ar[dr] & & \vdots & \\
    \cdots \ar[dr]& & \Sigma^0 X_3 \ar[ur] \ar[dr] & & \Sigma^{-d} X_3 \ar[ur] \ar[dr] & & \Sigma^{-2d} X_3 \ar[ur] \ar[dr] & & \Sigma^{-3d}X_3 \ar[ur] \ar[dr] & & \Sigma^{-4d}X_3 \ar[ur] \ar[dr] & & \cdots \\
    & \Sigma^d X_2 \ar[ur] \ar[dr] & & \Sigma^0 X_2 \ar[ur] \ar[dr] & & \Sigma^{-d} X_2 \ar[ur] \ar[dr] & & \Sigma^{-2d}X_2 \ar[ur] \ar[dr] & & \Sigma^{-3d}X_2 \ar[ur] \ar[dr] & & \Sigma^{-4d}X_2 \ar[ur] \ar[dr] & \\
    \cdots \ar[ur]\ar[dr]& & \Sigma^d X_1 \ar[ur] \ar[dr] & & \Sigma^0 X_1 \ar[ur] \ar[dr] & & \Sigma^{-d} X_1 \ar[ur] \ar[dr] & & \Sigma^{-2d}X_1 \ar[ur] \ar[dr] & & \Sigma^{-3d}X_1 \ar[ur] \ar[dr] & & \cdots\\
    & \Sigma^{2d} X_0 \ar[ur] & & \Sigma^d X_0 \ar[ur] & & \Sigma^0 X_0 \ar[ur] & & \Sigma^{-d}X_0 \ar[ur] & & \Sigma^{-2d}X_0 \ar[ur] & & \Sigma^{-3d}X_0 \ar[ur] & \\
               }
\]
\caption{A component of the AR quiver for $w \neq 1$}
\label{fig:3}
\end{figure}
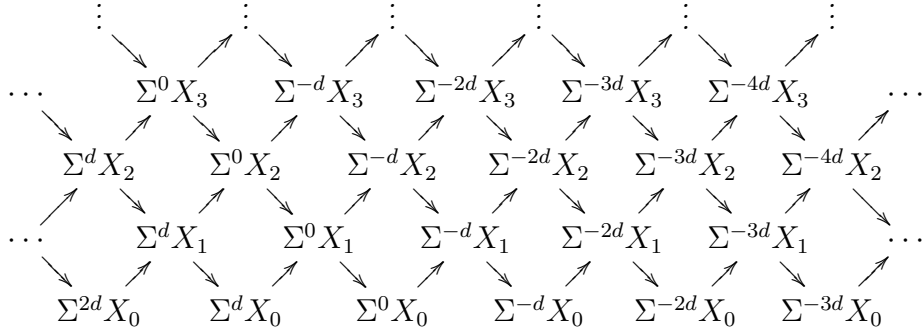

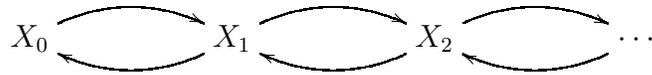
\begin{figure}
\[
\vcenter{
  \xymatrix{
    X_0 \ar@/^1pc/[rr] & & 
    X_1 \ar@/^1pc/[rr] \ar@/^1pc/[ll] & &
    X_2 \ar@/^1pc/[rr] \ar@/^1pc/[ll] & &
    \cdots \ar@/^1pc/[ll]
           }
        }
\]
\caption{A component of the AR quiver for $w = 1$}
\label{fig:4}
\end{figure}

\end{Proposition}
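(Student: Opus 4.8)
The plan is to use that $\sT$ has a Serre functor to guarantee existence of Auslander--Reiten triangles, to identify them explicitly, and then to read off the Auslander--Reiten quiver from this together with the list of indecomposables in Proposition \ref{pro:Xr}. First, $\sT$ is a Hom-finite Krull--Schmidt $k$-linear triangulated category (Remark \ref{rmk:Krull-Schmidt}) with Serre functor $S = \Sigma^w$ (Proposition \ref{pro:Serre}), so it admits Auslander--Reiten triangles, and its Auslander--Reiten translation is $\tau = \Sigma^{-1}S = \Sigma^d$ (Remark \ref{rmk:tau}). Since $\Sigma$ is an autoequivalence, it suffices to determine the Auslander--Reiten triangle ending at each $X_r$, $r \geq 0$; all others are obtained by applying powers of $\Sigma$, using that by Proposition \ref{pro:Xr} the indecomposables are exactly the $\Sigma^i X_r$, and that these are pairwise non-isomorphic (compare supports of the graded homology $\H_*(\Sigma^i X_r)$, which determine the pair $(i,r)$).

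Next I would produce the relevant triangles directly from short exact sequences of DG modules. For $r = 0$ the quotient $X_1 = A/(T^2) \to A/(T) = k$ has kernel $(T)/(T^2) \cong \Sigma^d k$, giving a non-split triangle $\Sigma^d X_0 \to X_1 \to X_0 \to \Sigma^{d+1}X_0$ whose end terms are $\tau X_0$ and $X_0$; it is the Auslander--Reiten triangle ending at $X_0$ because, under the Serre duality isomorphism $\sT(X_0,\Sigma^{d+1}X_0) = \sT(X_0,S X_0) \cong \Hom_k(\End(X_0),k)$, its nonzero connecting morphism spans the socle of the right $\End(X_0)$-module $\sT(X_0,\Sigma^{d+1}X_0)$. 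For $r \geq 1$ one splices the inclusion $\Sigma^d X_{r-1} \cong (T)/(T^{r+1}) \hookrightarrow X_r$ (cokernel $k = X_0$) and the quotient $X_{r+1} = A/(T^{r+2}) \to A/(T^{r+1}) = X_r$ (kernel $\Sigma^{(r+1)d}X_0$) into a short exact sequence of DG modules $0 \to \Sigma^d X_r \to \Sigma^d X_{r-1}\oplus X_{r+1} \to X_r \to 0$, and argues as before that the resulting non-split triangle $\Sigma^d X_r \to \Sigma^d X_{r-1}\oplus X_{r+1} \to X_r \to \Sigma^{d+1}X_r$ is the Auslander--Reiten triangle ending at $X_r$. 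The input for the socle check is a computation of the graded Hom spaces $\bigoplus_i \sT(X_r,\Sigma^i X_s)$, which is routine: apply $\Hom_{\Dc(A)}(-,\Sigma^i X_s)$ to the triangle $\Sigma^{(r+1)d}A \xrightarrow{\,\cdot T^{r+1}\,} A \to X_r$ of Remark \ref{rmk:Xr}, noting that $\Hom_{\Dc(A)}(A,\Sigma^j X_s)$ is one-dimensional exactly for the $s+1$ degrees $j$ carrying the homology of $X_s$ and is zero otherwise, and that the connecting maps are multiplication by $T^{r+1}$ on the $k[T]/(T^{s+1})$-module $\H_*(X_s)$.

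It then remains to assemble the picture. The preceding step shows that, up to scalars and suspensions, the irreducible morphisms are precisely the quotients $X_{r+1}\to X_r$ and the inclusions $\Sigma^d X_r\cong(T)/(T^{r+2})\hookrightarrow X_{r+1}$ for $r\ge 0$, each of multiplicity one. If $w \neq 1$, so $d \neq 0$, following these arrows and their $\Sigma$-translates out of $X_0$ reproduces exactly the component drawn in Figure \ref{fig:3}: its ``$X_0$-row'' has one arrow in and one out, every other row has two of each, so the component is a copy of $\BZ A_\infty$; moreover $\Sigma^a X_0$ and $\Sigma^b X_0$ lie in the same component if and only if $a \equiv b \pmod d$ (since $\Sigma^{jd}X_0 \cong X_0$ forces $j = 0$, again by homology supports), so there are exactly $|d|$ components, on each of which $\tau = \Sigma^d$ has infinite order, which rules out the tube alternative. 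If $w = 1$, so $d = 0$ and $\tau = \id$, the irreducible maps are $X_r \rightleftarrows X_{r+1}$, the objects $X_0, X_1, X_2,\ldots$ form the homogeneous tube of Figure \ref{fig:4}, every component is $\tau$-periodic of period one hence a homogeneous tube, and the $\Sigma$-translates of this tube are pairwise disjoint and exhaust the indecomposables, giving countably many homogeneous tubes.

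The main obstacle is the middle step: pinning down the Auslander--Reiten triangles, i.e. carrying out the Hom computation and checking that the connecting morphisms of the explicit triangles lie in and span the relevant socles -- in particular getting the two-summand middle term $\Sigma^d X_{r-1}\oplus X_{r+1}$ for $r\ge 1$. Once the Auslander--Reiten triangles are established, the description of the quiver in both cases is bookkeeping with $\tau = \Sigma^d$ and Proposition \ref{pro:Xr}.
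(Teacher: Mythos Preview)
Your proposal is correct and essentially carries out in full what the paper only sketches or outsources to references. The paper's own proof is almost entirely by citation: for $w \geq 2$ it invokes \cite[thm.\ 8.13]{J}; for the general shape of the components it cites \cite[sec.\ 3.3]{FY}; for $w \leq 0$ it merely says that ``one can compute the AR triangles of $\sT$ by methods similar to those of \cite[sec.\ 8]{J}''. Only the case $w = 1$ is argued directly, and there the paper takes a shortcut you do not: since $\tau = \id$, the AR triangle is $X_r \to Y \to X_r$, and the paper reads off $Y$ from the long exact homology sequence (getting $Y \cong X_1$ for $r=0$ and $Y \cong X_{r-1}\oplus X_{r+1}$ for $r\geq 1$) rather than exhibiting the short exact sequence of DG modules and running the socle check. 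Your route---building the candidate AR triangles uniformly from explicit short exact sequences of DG modules and then verifying the socle condition via the Hom computation based on Remark \ref{rmk:Xr}---is more self-contained and works the same way for every $w$; the price is the bookkeeping of that Hom computation, which you rightly flag as the main obstacle. Both approaches arrive at the same irreducible maps and hence the same quiver; yours simply fills in what the paper leaves to the literature.
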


\begin{proof}
For $w \geq 2$ this is \cite[thm.\ 8.13]{J}.

For $w$ general, the shape of the AR quiver is given in \cite[sec.\
3.3]{FY}.  For $w \leq 0$, to see that the $|d|$ copies of $\BZ
A_{\infty}$ look as claimed, one can compute the AR triangles of $\sT$
by methods similar to those of \cite[sec.\ 8]{J}.

Finally, for $w = 1$ we have $d = 0$.  The AR translation is $\tau =
\Sigma^0 = \operatorname{id}$ by Remark \ref{rmk:tau}, so for each
$X_r$ there is an AR triangle $X_r \rightarrow Y \rightarrow X_r$.
The long exact homology sequence shows that if $r = 0$ then $Y = X_1$
and if $r \geq 1$ then $Y = X_{r - 1} \oplus X_{r + 1}$.  Hence the
homogeneous tube in Figure \ref{fig:4} is a component of the AR quiver
as claimed.  For each $i$, applying $\Sigma^i$ to Figure \ref{fig:4}
gives a component of the AR quiver.  The components obtained in this
fashion contain all indecomposable objects of $\sT$ so form the whole
AR quiver.
\end{proof}

\section{Morphisms in $\sT$}
\label{sec:morphisms}

This section computes the $\Hom$ spaces between indecomposable
objects in the category $\sT$.

\begin{Definition}
Suppose that $w \neq 1$ so the AR quiver of $\sT$ consists of copies
of $\BZ A_{\infty}$ by Proposition \ref{pro:ARquiver}(i).  Let $t \in
\sT$ be an indecomposable object.  Figure \ref{fig:5} defines two sets
$F^{\pm}(t)$ consisting of indecomposable objects in the same
component of the AR quiver as $t$.  Each set can be described as a
rectangle stretching off to infinity in one direction; it consists of
the objects inside the indicated boundaries including the ones on the
boundaries.  In particular we have $t \in F^{\pm}(t)$.
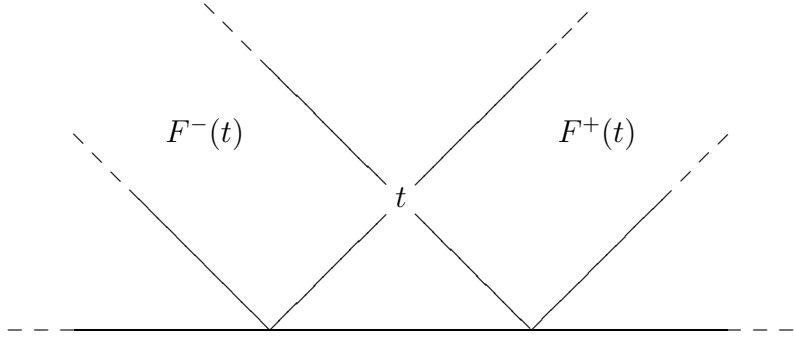
\begin{figure}
\[
\vcenter{
  \xymatrix @-3.0pc @! {
    &&&*{} &&&&&&&& \\
    &&&& *{} \ar@{--}[ul] & & & & *{} \ar@{--}[ur] \\
    &*{}&& F^-(t) & & & & & & F^+(t) && *{}\\
    &&*{}\ar@{--}[ul]& & & & {t} \ar@{-}[ddll] \ar@{-}[uull] \ar@{-}[ddrr] \ar@{-}[uurr]& & &&*{}\ar@{--}[ur]&\\ 
    && \\
    *{}\ar@{--}[r]&*{} \ar@{-}[rrr] && & *{}\ar@{-}[uull]\ar@{-}[rrrrrr]& & & & \ar@{-}[uurr]\ar@{-}[rrr]&&&*{}\ar@{--}[r]&*{}\\
           }
}
\]
\caption{The regions $F^{\pm}(t)$ for $w \neq 1$}
\label{fig:5}
\end{figure}
\end{Definition}

\begin{Proposition}
\label{pro:Hom_neq01}
Suppose that $w \neq 0, 1$.  Let $t, u$ be indecomposable objects in
$\sT$.  Then
\[
  \dim_k \sT( t , u ) = 
  \left\{
    \begin{array}{cl}
      1 & \mbox{for $u \in F^+(t) \cup F^-(St)$}, \\[2pt]
      0 & \mbox{otherwise},
    \end{array}
  \right.
\]
where $S = \Sigma^w$ is the Serre functor of $\sT$; see Proposition
\ref{pro:Serre}.
\end{Proposition}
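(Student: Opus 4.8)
The plan is to reduce the statement to an explicit $\RHom$ computation inside $\sD(A)$ and then to match the outcome with the combinatorics of Figure \ref{fig:5}. Since every indecomposable object of $\sT$ is a (de)suspension of one of the $X_r$ by Proposition \ref{pro:Xr}, and since $\Hom$, the regions $F^{\pm}$, and the functor $S=\Sigma^w$ all commute with $\Sigma$, it suffices to take $t=X_r$ and $u=\Sigma^{j}X_s$ and to prove that
\[
  \dim_k\Hom_{\sT}(X_r,\Sigma^{j}X_s)=\dim_k\H^{j}\RHom_A(X_r,X_s)
\]
is $1$ exactly when $\Sigma^{j}X_s\in F^+(X_r)\cup F^-(\Sigma^w X_r)$, and $0$ otherwise.

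First I would compute $\RHom_A(X_r,X_s)$ by applying the contravariant functor $\RHom_A(-,X_s)$ to the distinguished triangle $\Sigma^{(r+1)d}A\stackrel{\cdot T^{r+1}}{\longrightarrow}A\longrightarrow X_r$ of Remark \ref{rmk:Xr}. Using $\RHom_A(A,X_s)\simeq X_s$ and $\RHom_A(\Sigma^{(r+1)d}A,X_s)\simeq\Sigma^{-(r+1)d}X_s$, this yields a distinguished triangle
\[
  \RHom_A(X_r,X_s)\longrightarrow X_s\stackrel{\cdot T^{r+1}}{\longrightarrow}\Sigma^{-(r+1)d}X_s\longrightarrow .
\]
Here $\H^*(X_s)\cong k[T]/(T^{s+1})$ is one-dimensional in each of $s+1$ degrees forming an arithmetic progression of common difference $d$ and zero elsewhere, and under this identification the map $X_s\to\Sigma^{-(r+1)d}X_s$ is, on cohomology, multiplication by $T^{r+1}$ on $k[T]/(T^{s+1})$. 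The long exact cohomology sequence of the triangle then determines $\H^{j}\RHom_A(X_r,X_s)$ in each degree $j$ from the kernel and the cokernel of this multiplication in two consecutive degrees; since kernels and cokernels of multiplication by $T^{r+1}$ on $k[T]/(T^{s+1})$ are immediate, this gives $\dim_k\H^{j}\RHom_A(X_r,X_s)$ explicitly as a function of $j$, $r$, $s$.

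Next I would extract two facts. First, $\dim_k\H^{j}\RHom_A(X_r,X_s)\le 1$ for every $j$: the ``kernel'' contributions occur only in degrees divisible by $d$ and the ``cokernel'' contributions only in degrees congruent to $-1$ modulo $d$, so for $|d|\ge 2$ they never lie in the same degree, while the remaining case $d=1$ (that is, $w=2$) is handled by checking directly that the two index intervals stay disjoint. This is exactly where $w\neq 0$ is used: for $w=0$, i.e.\ $d=-1$, the two kinds of contribution can coincide — already in $\Hom_{\sT}(X_r,X_r)$ — and the dimension can be $2$. Second, the set of pairs $(s,j)$ for which the dimension equals $1$ splits into two families, the one coming from the kernels and the one coming from the cokernels; the first consists of the objects $\Sigma^{j}X_s$ lying in $F^+(X_r)$ and the second of those lying in $F^-(\Sigma^w X_r)$, which one matches using the coordinates of the AR quiver in Figure \ref{fig:3}, the description of $F^{\pm}$ in Figure \ref{fig:5}, and $S=\Sigma^w$ from Proposition \ref{pro:Serre}.

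The step I expect to be the real obstacle is this last one: translating the two index ranges delivered by the cohomology computation into the two ``rectangles'' $F^+(X_r)$ and $F^-(\Sigma^w X_r)$ inside the $|d|$ copies of $\BZ A_{\infty}$, carrying this out uniformly in the sign of $d$, and confirming that the two rectangles meet in exactly the one object $t$ so that no pair $(s,j)$ is double-counted. Everything else is a short and routine manipulation of $k[T]/(T^{s+1})$.
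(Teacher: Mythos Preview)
Your approach is essentially identical to the paper's own proof: the authors apply $\RHom_A(-,X_s)$ to the same distinguished triangle from Remark~\ref{rmk:Xr}, write out the resulting long exact cohomology sequence, compute the outer terms and the maps induced by $\cdot T^{r+1}$, and then match the outcome against the AR quiver of Proposition~\ref{pro:ARquiver}. One small correction to your final paragraph: for $w\neq 0,1$ the sets $F^+(t)$ and $F^-(St)$ are actually \emph{disjoint} (for $w\neq 2$ they even lie in different components of the AR quiver), so there is no overlap to worry about---the coincidence you anticipate occurs only when $w=0$, which is precisely the excluded case handled separately in Proposition~\ref{pro:Hom_0}.
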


\begin{Proposition}
\label{pro:Hom_0}
Suppose that $w = 0$.  Let $t, u$ be indecomposable objects in
$\sT$.  Then
\[
  \dim_k \sT( t , u ) = 
  \left\{
    \begin{array}{cl}
      2 & \mbox{for $u = t$}, \\[2pt]
      1 & \mbox{for $u \in ( F^+(t) \cup F^-(t) ) \setminus t$}, \\[2pt]
      0 & \mbox{otherwise}.
    \end{array}
  \right.
\]
\end{Proposition}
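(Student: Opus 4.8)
The plan is to reduce the statement to a concrete homology computation inside the model $\sT = \langle k \rangle \subseteq \Dc(A)$, where now $d = w - 1 = -1$ since $w = 0$. First I would record that, by Proposition \ref{pro:Xr}, every indecomposable object of $\sT$ is a (de)suspension of some $X_r$, and that by the bijection of \cite[thm.\ 3.6]{KYZ} used in that proof, $\Hom$ spaces in $\Dc(A)$ are computed by the graded functor $\H^*(-) = \Hom_{\Dc(A)}(A,\Sigma^*(-))$ together with the hereditary structure of $\gr(k[T]^{\opp})$. Concretely $\H^*(X_r)$ is $k[T]/(T^{r+1})$ as a graded $k[T]$-module, sitting in homological degrees $0, d, 2d, \ldots, rd$; for $w = 0$ these are degrees $0, -1, -2, \ldots, -r$.

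Next I would exploit the Serre functor $S = \Sigma^w = \Sigma^0 = \id$, which is the crucial simplification at $w = 0$: Serre duality gives $\sT(t,u) \cong D\,\sT(u, St) = D\,\sT(u, t)$, so the $\Hom$ spaces are symmetric in $t$ and $u$. This means it suffices to compute $\dim_k \sT(X_r, \Sigma^i X_s)$ for all $r, s \geq 0$ and $i \in \BZ$, and the answer will automatically be symmetric. I would do this by applying $\RHom_A(-, X_s)$ (or equivalently $\Hom_{\Dc(A)}(-, \Sigma^* X_s)$) to the triangle $\Sigma^{(r+1)d} A \to A \to X_r$ of Remark \ref{rmk:Xr}, and reading off the long exact sequence in homology. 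Since $\H^*(A) = k[T]$ is the free graded module, $\Hom_{\Dc(A)}(A, \Sigma^i X_s)$ is just the degree-$i$ piece of $k[T]/(T^{s+1})$, which is $1$-dimensional for $i \in \{0, d, \ldots, sd\}$ and $0$ otherwise. The long exact sequence then pins down $\dim_k \Hom_{\Dc(A)}(X_r, \Sigma^i X_s)$ as a difference/sum of such terms, controlled by the map $\cdot T^{r+1}$.

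Having the numerical answer, the remaining work is bookkeeping: translate the condition ``$\Hom_{\Dc(A)}(X_r, \Sigma^i X_s)$ is nonzero'' into the combinatorics of the $\BZ A_\infty$ component described in Proposition \ref{pro:ARquiver}(i) and Figure \ref{fig:3}, i.e.\ identify exactly which $\Sigma^i X_s$ lie in $F^+(X_r) \cup F^-(X_r)$, and check that the endomorphism ring $\sT(X_r, X_r)$ is $2$-dimensional (it should be $k \oplus k\cdot\eta$ where $\eta$ is the composite $X_r \to \Sigma^? X_0$-type map closing up, coming from the self-extension; concretely the triangle gives a class in degree $0$ besides the identity because $d = 0 \cdot$—no, $d = -1$—so the extra class arises from $\Hom(A, \Sigma^{(r+1)d}X_r)$ overlapping the identity degree only when $(r+1)d$ hits an allowed degree, which for $w=0$ it does at the bottom). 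I would then assemble this with the Serre-duality symmetry to conclude that $\dim_k \sT(t,u)$ is $2$ when $u = t$, is $1$ when $u \in (F^+(t) \cup F^-(t)) \setminus t$, and $0$ otherwise, noting that the $w = 0$ case collapses the two regions $F^+(t)$ and $F^-(St)$ of Proposition \ref{pro:Hom_neq01} into one because $S = \id$, and the overlap at $u = t$ accounts for the dimension jump.

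The main obstacle I anticipate is handling the homological degrees carefully when $w = 0$, because then $d = -1 \neq 0$ but the Serre functor $\Sigma^w$ is trivial, so the ``diagonal'' shift in the AR quiver and the suspension are genuinely different yet the $\Hom$ spaces are symmetric; keeping straight which distinguished triangle long-exact-sequence term survives (especially the boundary cases where the map induced by $\cdot T^{r+1}$ is zero versus injective versus surjective, and where the degree of the identity endomorphism coincides with a second basis element) is exactly where an off-by-one error would creep in. A clean way to sidestep part of this is to first prove the $u = t$ statement separately — showing $\dim_k \End_\sT(X_r) = 2$ directly from the triangle, using that $T^{r+1}$ acts as zero on $X_r$ — and then treat $u \neq t$ via the long exact sequence plus the symmetry, so that only one of the two ``sides'' of $F^\pm$ needs to be verified by hand.
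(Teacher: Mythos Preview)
Your proposal is correct and follows essentially the same route as the paper: apply $\RHom_A(-,X_s)$ to the distinguished triangle $\Sigma^{(r+1)d}A \to A \to X_r$ of Remark \ref{rmk:Xr}, read off the long exact homology sequence with outer terms given by pieces of $\H^*(X_s)$ and connecting maps induced by $\cdot\,T^{r+1}$, and then match the resulting dimensions against the $\BZ A_\infty$ combinatorics of Proposition \ref{pro:ARquiver}(i). Your extra observation that $S=\Sigma^0=\id$ makes $\sT(t,u)\cong D\,\sT(u,t)$, so that only one of $F^{\pm}$ needs to be checked by hand, is a pleasant shortcut the paper does not invoke, but it does not change the underlying computation.
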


\begin{Proposition}
\label{pro:Hom_1}
Suppose that $w = 1$.  Let $u$ be an indecomposable object of $\sT$.
Then
\[
  \dim_k \sT(X_r,u) = 
  \left\{
    \begin{array}{cl}
      \min \{\, r , s \,\} + 1 & \mbox{for $u = X_s$ or $u = \Sigma X_s$}, \\[2pt]
      0 & \mbox{for all other $u$}.
    \end{array}
  \right.
\]
\end{Proposition}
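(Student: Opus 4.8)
The plan is to exploit the fact that when $w = 1$ we have $d = w - 1 = 0$, so the DG algebra $A = k[T]$ has zero differential and is concentrated in degree $0$; in other words $A$ is simply the polynomial ring $k[T]$ viewed as a DG algebra in degree zero, $\sD(A)$ is the ordinary (unbounded) derived category of $k[T]$-modules, and the $X_r = A/(T^{r+1})$ are honest $k[T]$-modules placed in degree $0$ with $\dim_k X_r = r+1$. By Proposition \ref{pro:Xr} every indecomposable object $u$ of $\sT$ is of the form $\Sigma^n X_s$ for some $n \in \BZ$ and $s \geq 0$, so it suffices to determine $\dim_k \sT(X_r , \Sigma^n X_s)$ for all such $n$ and $s$.

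The key point is that $k[T]$ is a principal ideal domain, hence $\gldim k[T] = 1$. For $k[T]$-modules $M, N$ placed in degree $0$ one has $\sT(M , \Sigma^n N) = \Ext^n_{k[T]}(M , N)$, which vanishes for $n < 0$ and for $n \geq 2$; thus $\sT(X_r , \Sigma^n X_s) = 0$ unless $n = 0$ or $n = 1$, that is, unless $u = X_s$ or $u = \Sigma X_s$, and in those two cases the dimension equals $\dim_k \Hom_{k[T]}(X_r , X_s)$, respectively $\dim_k \Ext^1_{k[T]}(X_r , X_s)$. For the first, a $k[T]$-linear map $X_r \to X_s$ is determined by the image of $1$, which may be an arbitrary element of $X_s$ annihilated by $T^{r+1}$; the submodule $\ann_{X_s}(T^{r+1}) \subseteq X_s$ has $k$-basis the residues of the $T^j$ with $\max\{s-r,0\} \leq j \leq s$, so it has dimension $\min\{r,s\} + 1$. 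For the second, apply $\Hom_{k[T]}(-,X_s)$ to the two-term free resolution $0 \to k[T] \stackrel{\cdot\, T^{r+1}}{\longrightarrow} k[T] \to X_r \to 0$ supplied by the triangle of Remark \ref{rmk:Xr} (with $d = 0$): this identifies $\Ext^1_{k[T]}(X_r , X_s)$ with $\Coker\bigl(X_s \stackrel{\cdot\, T^{r+1}}{\longrightarrow} X_s\bigr)$, whose dimension is again $\min\{r,s\} + 1$ by an immediate count. Assembling the three cases gives the claimed formula. As a sanity check the two nonzero values agree, as they must: $\sT$ is $1$-Calabi--Yau, so by Remark \ref{rmk:tau} we have $\Ext^1(X_r , X_s) \cong D\Hom(X_s , \tau X_r) = D\Hom(X_s , X_r)$ with $D = \Hom_k(-,k)$.

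There is no genuine obstacle here; once one observes that the $w = 1$ case degenerates to the classical derived category of a principal ideal domain, everything reduces to elementary linear algebra over $k[T]$. The only thing demanding a little care is the bookkeeping of grading conventions, to be sure that the two degrees carrying nonzero Hom spaces are precisely the ones labelled $u = X_s$ and $u = \Sigma X_s$ (rather than, say, $\Sigma^{-1} X_s$); this is taken care of by the identity $\sT(M , \Sigma^n N) = \Ext^n_{k[T]}(M , N)$ together with the vanishing of $\Ext$ outside degrees $0$ and $1 = \gldim k[T]$.
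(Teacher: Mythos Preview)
Your proof is correct and is essentially the paper's own argument, specialised to $w=1$ and rephrased in classical language: the paper also applies $\RHom_A(-,X_s)$ to the triangle of Remark~\ref{rmk:Xr} and reads off $\sT(X_r,\Sigma^i X_s)$ as the kernel and cokernel of $\cdot\,T^{r+1}$ on $X_s$, which is exactly your $\Hom$/$\Ext^1$ computation via the two-term free resolution. Your use of $\gldim k[T]=1$ to dispose of all other degrees in one stroke is a nice conceptual shortcut over the paper's direct reading of the long exact sequence, and the Serre-duality sanity check is a pleasant extra, but the underlying computation is the same.
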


Note that in Proposition \ref{pro:Hom_neq01}, the sets $F^+(t)$ and
$F^-(St)$ are disjoint.  For $w \neq 2$, they even sit in different
components of the AR quiver, while for $w = 2$ we have $d = w - 1 = 1$
and the AR quiver has only one component.  In Proposition
\ref{pro:Hom_0}, the sets $F^+(t)$ and $F^-(t)$ have intersection $t$.
In this case $w = 0$ so $d = w - 1 = -1$ and the AR quiver has only
one component.

{\it Proof } of Propositions \ref{pro:Hom_neq01}, \ref{pro:Hom_0}, and
\ref{pro:Hom_1}.

Propositions \ref{pro:Hom_neq01} and \ref{pro:Hom_0} give the
dimensions of $\Hom$ spaces in a conceptual way using the regions
$F^{\pm}$.  Unfortunately we do not have a conceptual proof.

The proof we have is pedestrian: Applying $\RHom_A( - , X_s )$ to the
distinguished triangle from Remark \ref{rmk:Xr} gives a new
distinguished triangle whose long exact homology sequence contains
\[
  \H^{i-1}(X_s)
  \rightarrow \H^{i - (r+1)d - 1}(X_s)
  \rightarrow \H^i \RHom_A( X_r , X_s )
  \rightarrow \H^{i}(X_s)
  \rightarrow \H^{i - (r+1)d}(X_s).
\]
The middle term is isomorphic to $\sT( X_r , \Sigma^i X_s)$.  The DG
module $X_s$ is $A / (T^{s + 1})$, so the four outer terms are easily
computable.  The first and last maps are induced by $\cdot T^{r + 1}$
and can also be computed.  Hence the middle term can be determined.

For $w \neq 1$, combining the dimensions of $\Hom$ spaces with the
detailed structure of the AR quiver as described by Proposition
\ref{pro:ARquiver} proves Propositions \ref{pro:Hom_neq01} and
\ref{pro:Hom_0}, and for $w = 1$ one gets Proposition
\ref{pro:Hom_1} directly.
\hfill $\Box$

\section{t- and co-t-structures}
\label{sec:remarks}

This section gives some easy properties of t- and co-t-structures.
Lemmas \ref{lem:2} and \ref{lem:3} are valid in general triangulated
categories.

Recall that if $( \sX , \sY )$ is a t-structure then the heart is $\sH
= \sX \cap \Sigma \sY$, and if $( \sA , \sB )$ is a co-t-structure
then the co-heart is $\sC = \sA \cap \Sigma^{-1} \sB$.

\begin{Lemma}
\label{lem:2}
Let $( \sX , \sY )$ be a t-structure and $( \sA , \sB )$ a
co-t-structure with heart and co-heart $\sH$ and $\sC$.
\begin{enumerate}

  \item  $\Hom( \sH , \Sigma^{<0} \sH ) = 0$.

\smallskip

  \item  $\Hom( \sC , \Sigma^{>0} \sC ) = 0$. 

\smallskip

  \item  $\sX = \Sigma \sX \;\Leftrightarrow\; \sH = 0$.

\smallskip

  \item  $\sA = \Sigma \sA \;\Leftrightarrow\; \sC = 0$.

\end{enumerate}
\end{Lemma}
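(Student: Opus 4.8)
The plan is to reduce all four parts to two standard facts about an Iyama--Yoshino torsion pair $( \sM , \sN )$. The first is that orthogonality pins the two classes down: $\sM = \{ s \mid \Hom( s , \sN ) = 0 \}$ and $\sN = \{ s \mid \Hom( \sM , s ) = 0 \}$. For the first equality, given $s$ with $\Hom( s , \sN ) = 0$, pick a triangle $m \rightarrow s \rightarrow n \rightarrow \Sigma m$ with $m \in \sM$ and $n \in \sN$; since $\Hom( s , n ) = 0$ the map $m \rightarrow s$ is a split epimorphism, so $s$ is a direct summand of $m \in \sM$ and hence lies in $\sM$. The second fact is that the shift hypothesis on one class governs the other: for a t-structure $( \sX , \sY )$, from $\Sigma \sX \subseteq \sX$ and $\Hom( \sX , \Sigma^{-1} \sY ) = \Hom( \Sigma \sX , \sY ) = 0$ one gets $\Sigma^{-1} \sY \subseteq \sY$, so $\sY$ is closed under negative shifts; dually, for a co-t-structure $( \sA , \sB )$, from $\Sigma^{-1} \sA \subseteq \sA$ one gets $\Sigma \sB \subseteq \sB$.

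Granting this, parts (i) and (ii) are one-line computations. For (i), if $h , h' \in \sH = \sX \cap \Sigma \sY$ and $n \geq 1$, then $\Sigma^{-n} h' \in \Sigma^{1-n} \sY \subseteq \sY$, so $\Hom( h , \Sigma^{-n} h' ) \subseteq \Hom( \sX , \sY ) = 0$. For (ii), if $c , c' \in \sC = \sA \cap \Sigma^{-1} \sB$ and $n \geq 1$, then $\Sigma^{n} c' \in \Sigma^{n-1} \sB \subseteq \sB$, so $\Hom( c , \Sigma^{n} c' ) \subseteq \Hom( \sA , \sB ) = 0$.

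For (iii), the implication $\sX = \Sigma \sX \Rightarrow \sH = 0$ is easy: if $h \in \sX \cap \Sigma \sY$ then $\Sigma^{-1} h$ lies in $\Sigma^{-1} \sX = \sX$ and in $\sY$, so its identity morphism factors through $\Hom( \sX , \sY ) = 0$ and $h = 0$. For the converse I would take $x \in \sX$, choose a triangle $x' \rightarrow \Sigma^{-1} x \rightarrow y' \rightarrow \Sigma x'$ with $x' \in \sX$, $y' \in \sY$ (using $\sS = \sX * \sY$), and aim to show $y' = 0$, since then $\Sigma^{-1} x \cong x' \in \sX$ and hence $\sX \subseteq \Sigma \sX$. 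Rotating to the triangle $\Sigma x' \rightarrow x \rightarrow \Sigma y' \rightarrow \Sigma^{2} x'$ and applying $\Hom( - , y )$ for $y \in \sY$, the neighbouring terms $\Hom( \Sigma^{2} x' , y ) = \Hom( x' , \Sigma^{-2} y )$ and $\Hom( x , y )$ both vanish because $\sY$ is closed under negative shifts, so $\Hom( \Sigma y' , y ) = 0$; hence $\Sigma y' \in \sX$, and since also $\Sigma y' \in \Sigma \sY$ we get $\Sigma y' \in \sH = 0$, whence $y' = 0$. Part (iv) is the mirror image: if $\sA = \Sigma \sA$ and $c \in \sA \cap \Sigma^{-1} \sB$, then $\Sigma c$ lies in $\Sigma \sA = \sA$ and in $\sB$, so $\Sigma c = 0$ and $c = 0$; conversely, if $\sC = 0$ I would truncate $\Sigma a$ for $a \in \sA$ as $a'' \rightarrow \Sigma a \rightarrow b'' \rightarrow \Sigma a''$, rewrite this as the triangle $a \rightarrow \Sigma^{-1} b'' \rightarrow a'' \rightarrow \Sigma a$, and apply $\Hom( - , b )$ for $b \in \sB$ to get $\Hom( \Sigma^{-1} b'' , b ) = 0$, so $\Sigma^{-1} b'' \in \sA \cap \Sigma^{-1} \sB = \sC = 0$ and $\Sigma a \cong a'' \in \sA$.

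I expect the only step with real content to be the converse halves of (iii) and (iv): the device of truncating a suspension (or desuspension) of an object that already lies in the subcategory $\sX$ (respectively $\sA$) and then observing that the leftover term is forced into the heart (respectively co-heart), which is zero by hypothesis. The rest --- the orthogonal description of the two classes, the shift-closure of the companion class, and the one-line arguments for (i) and (ii) --- is routine, so I would record it as preliminary observations and then treat the four items in turn.
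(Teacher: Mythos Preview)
Your proposal is correct and follows essentially the same route as the paper's proof: parts (i) and (ii) are the one-line orthogonality observations, and the converse direction in (iii) is handled exactly by truncating $\Sigma^{-1}x$ and showing that the $\sY$-part $y'$ satisfies $\Sigma y' \in \sH = 0$, with (iv) dual. The only cosmetic difference is that the paper concludes $\Sigma y' \in \sX$ by noting that $\sX = {}^{\perp}\sY$ is closed under extensions and reading this off the rotated triangle $x \rightarrow \Sigma y' \rightarrow \Sigma^{2}x'$, whereas you reach the same conclusion by applying $\Hom(-,y)$ to that triangle; these are two phrasings of the same step.
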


\begin{proof}
(i)  We have $\sH \subseteq \sX$ and $\Sigma^{<0}\sH \subseteq
\Sigma^{<0}\Sigma\sY = \Sigma^{\leq 0}\sY \subseteq \sY$.
The last $\subseteq$ is a well known property of t-structures and
follows from $\Sigma^{\geq 0}\sX \subseteq \sX$ by taking right
perpendicular categories; cf.\ \cite[remark after def.\ 2.2]{IY} by
which $\sX^{\perp} = \sY$.  Here $\perp$ is as defined in \cite[start
of sec.\ 2]{IY}.  But $\Hom( \sX , \sY ) = 0$ so $\Hom( \sH ,
\Sigma^{<0}\sH ) = 0$ follows.

(ii)  Dual to part (i).

(iii)  $\Rightarrow$:  Suppose $h \in \sH$.  Then $\Sigma^{-1}h \in
\Sigma^{-1}\sH \subseteq \Sigma^{-1}\sX = \sX$ so $h =
\Sigma(\Sigma^{-1} h) \in \Sigma \sX$.  We also have $h \in \sH
\subseteq \Sigma \sY$.  But $\Hom(\Sigma \sX , \Sigma \sY) = \Hom( \sX
, \sY ) = 0$ so $\Hom( h , h ) = 0$ proving $h = 0$.

$\Leftarrow$:  For $x \in \sX$ consider the distinguished triangle
\begin{equation}
\label{equ:tt_triangle}
  x' \rightarrow \Sigma^{-1}x \rightarrow y'
\end{equation}
with $x' \in \sX$, $y' \in \sY$ which exists because $( \sX , \sY )$
is a torsion pair.  It gives a distinguished triangle $x \rightarrow
\Sigma y' \rightarrow \Sigma^2 x'$ with $x, \Sigma^2 x' \in \sX$.  But
$\sX$ is closed under extensions since it is equal to ${}^{\perp}\sY$
by \cite[remark after def.\ 2.2]{IY} again, so $\Sigma y' \in \sX$.

We also have $\Sigma y' \in \Sigma \sY$ so $\Sigma y' \in \sH$ and
hence $\Sigma y' = 0$.  But then the distinguished triangle
\eqref{equ:tt_triangle} shows $\Sigma^{-1}x \cong x' \in \sX$.  Hence
$\Sigma^{-1}\sX \subseteq \sX$, and since we also know $\Sigma\sX
\subseteq \sX$ it follows that $\Sigma\sX = \sX$.

(iv) Dual to part (iii).
\end{proof}

A torsion pair $( \sM , \sN )$ with $\Sigma \sM = \sM$ (and
consequently $\Sigma \sN = \sN$) is called a stable t-structure; see
\cite[p.\ 468]{M}.  In this case, $\sM$ and $\sN$ are thick
subcategories of $\sT$.

\begin{Lemma}
\label{lem:3}
If $( \sM , \sN )$ and $( \sM' , \sN' )$ are torsion pairs with $\sM
\subseteq \sM'$ and $\sN \subseteq \sN'$, then  $( \sM , \sN ) = (
\sM' , \sN' )$. 
\end{Lemma}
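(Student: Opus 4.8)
The plan is to show that the inclusions $\sM \subseteq \sM'$ and $\sN \subseteq \sN'$ are in fact equalities, using only the axioms of a torsion pair together with the orthogonality and decomposition properties. I will prove $\sM' \subseteq \sM$; the inclusion $\sN' \subseteq \sN$ is dual. So let $m' \in \sM'$ be arbitrary. Since $( \sM , \sN )$ is a torsion pair, there is a distinguished triangle
\[
  m \longrightarrow m' \longrightarrow n
\]
with $m \in \sM$ and $n \in \sN$. The goal is to force $n = 0$, which then makes $m' \cong m \in \sM$.

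First I would observe that $n$ lies in $\sM'$: indeed $m \in \sM \subseteq \sM'$ and $m' \in \sM'$, and $\sM'$ is closed under the relevant extensions because it is the left half of a torsion pair — rotating the triangle gives $m' \to n \to \Sigma m$ with $m', \Sigma m \in \sM'$ (using $\Sigma \sM' $ need not be contained in $\sM'$ in general, so instead I would rotate the other way: $\Sigma^{-1} n \to m \to m'$, so $n$ appears in a triangle with its two outer terms $m, m' \in \sM'$ after rotation, and closure of $\sM'$ under such extensions — which holds since $\sM' = {}^{\perp}\sN'$ is closed under extensions — yields $n \in \sM'$). Second, $n \in \sN \subseteq \sN'$ by hypothesis. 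Hence $n \in \sM' \cap \sN'$. But $\sT( \sM' , \sN' ) = 0$, so in particular $\sT( n , n ) = 0$, forcing $\id_n = 0$ and therefore $n = 0$. Then the triangle collapses to an isomorphism $m \xrightarrow{\ \sim\ } m'$, so $m' \in \sM$ since $\sM$ is closed under isomorphism. This proves $\sM' \subseteq \sM$, hence $\sM = \sM'$.

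The analogous argument with the roles of the two classes reversed — starting from $n' \in \sN'$, taking the triangle $m \to n' \to n$ with $m \in \sM$, $n \in \sN$, showing $m \in \sM \cap \sM' = \sM \subseteq \sM$... actually here one shows $m \in \sM'$ automatically and $m \in \sN'$ by the analogous closure argument applied to $\sN'$, hence $m = 0$ and $n' \cong n \in \sN$ — gives $\sN' \subseteq \sN$, so $\sN = \sN'$ as well.

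The main point to get right is the closure-under-extensions step, i.e.\ that $\sM'$ (resp.\ $\sN'$) is closed under the extensions that arise here. This is where I would invoke the remark after \cite[def.\ 2.2]{IY} once more: for a torsion pair $(\sM',\sN')$ one has $\sM' = {}^{\perp}\sN'$ and $\sN' = (\sM')^{\perp}$, and a left (resp.\ right) perpendicular category is always closed under extensions in a triangulated category. Everything else is a routine diagram chase using rotation of triangles and the vanishing $\sT(\sM',\sN') = 0$; no computation inside the model of $\sT$ is needed, so the lemma holds in any triangulated category, as its statement suggests.
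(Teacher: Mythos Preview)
Your argument has a gap at the extension-closure step. To place $n$ in $\sM'$ via closure under extensions you need a distinguished triangle with $n$ in the \emph{middle} and both outer terms in $\sM'$. Rotating $m \to m' \to n$ gives $m' \to n \to \Sigma m$, and as you yourself note, $\Sigma m$ need not lie in $\sM'$ for a general torsion pair. Your attempted fix --- rotating to $\Sigma^{-1} n \to m \to m'$ --- places $m$ in the middle, not $n$, so it does not help; no rotation of this triangle exhibits $n$ as an extension of two objects of $\sM'$. The same problem recurs in your dual argument for $\sN'$: showing $m \in \sN'$ would require $\Sigma^{-1} n \in \sN'$, which again is not available.

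Your approach can be rescued without the extension claim: since $n \in \sN \subseteq \sN'$ and $m' \in \sM'$, the morphism $m' \to n$ in the triangle is zero; hence $m \to m'$ is a split epimorphism, and as $\sM$ is closed under direct summands, $m' \in \sM$. The paper, however, bypasses triangles entirely with a one-line argument: $\sN \subseteq \sN'$ gives ${}^{\perp}\sN \supseteq {}^{\perp}\sN'$, i.e.\ $\sM \supseteq \sM'$, so $\sM = \sM'$; then $\sN = \sM^{\perp} = (\sM')^{\perp} = \sN'$.
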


\begin{proof}
The inclusion $\sN \subseteq \sN'$ implies ${}^{\perp}\sN \supseteq
{}^{\perp}\sN'$, but this reads $\sM \supseteq \sM'$ by \cite[remark
after def.\ 2.2]{IY} so we learn $\sM = \sM'$.  Hence also $\sN =
\sM^{\perp} = \sM'^{\perp} = \sN'$. 
\end{proof}

\begin{Lemma}
\label{lem:stable}
A stable t-structure in $\sT$ is trivial.
\end{Lemma}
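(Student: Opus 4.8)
The plan is to show that a stable t-structure $(\sM, \sN)$ forces one of $\sM$, $\sN$ to be zero. The key feature to exploit is that a stable t-structure is \emph{both} a t-structure and a co-t-structure, so by Lemma \ref{lem:2}(iii) and (iv) its heart $\sH$ and co-heart $\sC$ both vanish: since $\Sigma\sM = \sM$ we get $\sH = 0$ and $\sC = 0$ simultaneously. Moreover, as noted just before the lemma, $\sM$ and $\sN$ are then thick subcategories of $\sT$, and since $\sT$ is classically generated by $k$, I can try to argue that $k$ (equivalently $X_0$) must lie entirely in one of them.

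First I would reduce to a statement about indecomposables: by Proposition \ref{pro:Xr} every indecomposable of $\sT$ is a (de)suspension of some $X_r$, and since $\Sigma\sM = \sM$, membership in $\sM$ or $\sN$ depends only on the orbit, so it suffices to decide where each $X_r$ sits. The orthogonality $\sT(\sM, \sN) = 0$ combined with stability says $\sT(\Sigma^i X_r, \Sigma^j X_s) = 0$ whenever $X_r$ is on the $\sM$-side and $X_s$ on the $\sN$-side, \emph{for all} $i, j$; that is, the whole graded Hom $\sT(X_r, \Sigma^* X_s)$ vanishes. Now I invoke the Hom computations of Section \ref{sec:morphisms} (Propositions \ref{pro:Hom_neq01}, \ref{pro:Hom_0}, \ref{pro:Hom_1}) to see that this graded-orthogonality is extremely restrictive. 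Concretely: $\sT(X_0, \Sigma^* X_0) \cong k[X]/(X^2) \neq 0$, so $X_0$ cannot be orthogonal to itself; hence $X_0$ lies in exactly one of $\sM$, $\sN$, say $\sM$. Then for any $X_s$ I examine $\sT(X_0, \Sigma^* X_s)$ and $\sT(X_s, \Sigma^* X_0)$; the Hom formulas show at least one of these graded spaces is nonzero for every $s$ (indeed $X_0 \in F^+(X_s)$ or $X_0 \in F^-(S X_s)$ up to suspension, or in the $w=1$ case $\min\{0,s\}+1 = 1 \neq 0$). Therefore no $X_s$ can be put into $\sN$ without violating orthogonality with $X_0 \in \sM$. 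This gives $\sN = 0$, and then $\sM = {}^{\perp}\sN = \sT$, so the stable t-structure is $(\sT, 0)$, which is trivial.

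The step I expect to be the main obstacle is verifying cleanly, from the Hom formulas, that $X_0$ is graded-non-orthogonal to \emph{every} indecomposable of $\sT$ — i.e. that for each $r$ there is some $i$ with $\sT(X_0, \Sigma^i X_r) \neq 0$ or $\sT(X_r, \Sigma^i X_0) \neq 0$. For $w = 1$ this is immediate from Proposition \ref{pro:Hom_1} since $\min\{r,0\}+1 = 1$. For $w \neq 1$ one uses that $X_0 = k$ sits at the ``bottom'' of its $\BZ A_\infty$ component and that the regions $F^+(X_r)$, $F^-(S X_r)$ (appropriately (de)suspended) sweep out the whole component and its Serre twist, so some suspension of $X_0$ lands in one of them; the $w = 0$ case uses Proposition \ref{pro:Hom_0} where $F^+ \cup F^-$ already covers the single component. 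Once that combinatorial fact is pinned down, the rest is the short orthogonality argument above together with Lemma \ref{lem:3} to conclude $\sM = \sT$.
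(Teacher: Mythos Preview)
Your overall strategy---use $\Sigma$-stability to reduce to graded orthogonality between the $X_r$---is sound and leads to a proof, but it is considerably more elaborate than the paper's, and one step as written does not follow.  The paper's argument is a single line: if $\sX \neq 0$ then $\sX$ contains an indecomposable object $x$, and since $\sX$ is a thick subcategory of $\sT$, inspection of the AR quiver shows $\sX = \sT$.  (Concretely, any $\Sigma^i X_r$ thickly generates $X_0 = k$, which thickly generates $\sT$ by definition.)  Your detour through Lemma~\ref{lem:2}(iii),(iv) and Lemma~\ref{lem:3} is not needed; the paper never computes a single $\Hom$ space here.

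The gap is in the sentence ``$\sT(X_0,\Sigma^* X_0) \neq 0$, so $X_0$ cannot be orthogonal to itself; hence $X_0$ lies in exactly one of $\sM,\sN$''.  Non-self-orthogonality only shows $X_0$ lies in \emph{at most} one of $\sM,\sN$; nothing in your argument forces an arbitrary indecomposable to lie in either half of a stable torsion pair.  The easy repair is to bypass $X_0$ altogether: if both $\sM$ and $\sN$ are nonzero, each contains some indecomposable, and by $\Sigma$-stability each contains some $X_r$ (respectively $X_s$); your own computation then gives $\sT(X_r,\Sigma^* X_s)\neq 0$, contradicting $\sT(\sM,\sN)=0$.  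With this fix your argument is correct, but note that it amounts to re-proving, via the $\Hom$ tables, exactly the thick-generation fact the paper invokes directly from the AR quiver.
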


\begin{proof}
Let $( \sX , \sY )$ be a stable t-structure in $\sT$ with $\sX \neq
0$.  Then $\sX$ contains an indecomposable object $x$.  But $\sX$ is a
thick subcategory of $\sT$, and it is easy to see from the AR quiver
of $\sT$ that hence $\sX = \sT$.
\end{proof}

\section{Proof of Theorem A}
\label{sec:proof}

\subsection{Proof of Theorem A for t-structures, $w \leq -1$}

$\,$
\medskip
\newline
Let $( \sX , \sY )$ be a t-structure in $\sT$ with heart $\sH = \sX
\cap \Sigma \sY$ and let $h \in \sH$.  Serre duality gives
\[
  \Hom_k ( \sT( h , h ) , k )
  \cong \sT( h , Sh )
  \cong \sT( h , \Sigma^w h )
  = 0
\]
where ``$=0$'' is by Lemma \ref{lem:2}(i) because $w \leq -1$.  This
implies $h = 0$ so $\sH = 0$.  But then $( \sX , \sY )$ is a stable
t-structure by Lemma \ref{lem:2}(iii) and hence trivial by Lemma
\ref{lem:stable}.

\subsection{Proof of Theorem A for t-structures, $w = 0$}

$\,$
\medskip
\newline
In this case we have $d = w - 1 = -1$.  The AR quiver consists of $|d|
= 1$ copy of $\BZ A_{\infty}$ by Lemma \ref{pro:ARquiver}(i); see
Figure \ref{fig:3}.

Assume that $( \sX , \sY )$ is a non-trivial t-structure in $\sT$.  By
Lemmas \ref{lem:stable} and \ref{lem:2}(iii) the heart $\sH$ is
non-zero so contains an indecomposable object.

However, if $t$ is an indecomposable object not on the base line of
the AR quiver then $\tau t \in F^-(t)$; see Figure \ref{fig:5}.  Hence
$\sT( t , \tau t ) \neq 0$ by Proposition \ref{pro:Hom_0}, and by
Remark \ref{rmk:tau} this reads $\sT ( t , \Sigma^{-1}t ) \neq 0$.
But $\sT( \sH , \Sigma^{<0}\sH ) = 0$ by Lemma \ref{lem:2}(i), so each
indecomposable object in $\sH$ is forced to be on the base line of the
AR quiver.

Moreover, if $h \in \sH$ is indecomposable then $\sH$ cannot contain
another indecomposable object $h'$:  Both objects would have to be on
the base line of the AR quiver which has only one component, so we
would have $h' = \tau^i h$ for some $i \neq 0$, that is, $h' =
\Sigma^{-i}h$.  But this contradicts $\sT( \sH , \Sigma^{<0}\sH ) =
0$.

It follows that $\sH = \add(h)$ for an indecomposable object $h$ on
the base line of the AR quiver, and $h = \Sigma^i X_0$ for some $i$.
Direct computation shows that $h$ is $0$-spherical, so there is a
non-zero, non-invertible morphism $h \rightarrow h$.  But this
morphism is easily verified not to have a kernel in $\sH$, and this is
a contradiction since the heart of a t-structure is abelian.

\subsection{Proof of Theorem A for t-structures, $w = 1$}

$\,$
\medskip
\newline
Here the AR quiver of $\sT$ consists of countably many stable tubes as
detailed in Proposition \ref{pro:ARquiver}(ii).

By \cite[sec.\ 3.1]{FY}, an alternative model of $\sT$ is
$\Df(k\mbox{\textlbrackdbl} X \mbox{\textrbrackdbl})$, the derived
category of complexes with bounded finite length homology over the
ring $k\mbox{\textlbrackdbl} X \mbox{\textrbrackdbl}$.  This shows
that $\sT$ has a canonical t-structure.

Assume that $( \sX , \sY )$ is a non-trivial t-structure in $\sT$.  In
particular, $\sX$ is closed under extensions.  The components of the
AR quiver of $\sT$ are homogeneous tubes and the AR triangles of $\sT$
can be read off.  The triangles imply that if $\sX$ contains an
indecomposable object $t$ then it contains the whole component of $t$.  So
$\sX$ is equal to $\add$ of a collection of components of the AR
quiver.  Now let $Q$ be a component such that $Q \subseteq \sX$ but
$\Sigma^{-1}Q, \Sigma^{-2}Q, \ldots \not\subseteq \sX$.  Such a $Q$
exists because $\sX$ is closed under $\Sigma$ and not equal to $0$ or
$\sT$.  It is then clear that
\[
  \sX = \add( Q \cup \Sigma Q \cup \cdots ).
\]
The right hand side only depends on the component $Q$ of the AR
quiver, and since all other components have the form $\Sigma^i Q$ (see
Proposition \ref{pro:ARquiver}(ii)), this implies that all non-trivial
t-structures are (de)suspensions of each other, and hence
(de)suspensions of the canonical t-structure.

\subsection{Proof of Theorem A for t-structures, $w \geq 2$}

$\,$
\medskip
\newline
Recall that $A$ is $k[T]$ viewed as a DG algebra with $T$ in
homological degree $d = w - 1$ and zero differential.  Each object of
$\sT$ is a direct sum of finitely many (de)suspensions of the objects
$X_r = A / (T^{r+1})$ by Remark \ref{rmk:Krull-Schmidt} and
Proposition \ref{pro:Xr}.  In particular, each object of $\sT$ is
isomorphic to a DG module $t$ which is finite dimensional over $k$.

Since $w \geq 2$ we have $d \geq 1$ which means that $A$ is a chain DG
algebra.  So for each DG left-$A$-module $t$ there is a distinguished
triangle $t_{(\geq 0)} \rightarrow t \rightarrow t_{(<0)}$ in $\sD(A)$
induced by the following (vertical) short exact sequence of DG
modules.
\[
  \xymatrix{
    \cdots \ar[r] & t_2 \ar[r] \ar@{=}[d] & t_1 \ar[r] \ar@{=}[d] & \Ker \partial_0 \ar[r] \ar@{^{(}->}[d] & 0 \ar[r] \ar[d] & 0 \ar[r] \ar[d] & \cdots\\
    \cdots \ar[r] & t_2 \ar[r] \ar[d] & t_1 \ar[r] \ar[d] & t_0 \ar[r]^{\partial_0} \ar@{->>}[d] & t_{-1} \ar[r] \ar@{=}[d] & t_{-2} \ar[r] \ar@{=}[d] & \cdots\\
    \cdots \ar[r] & 0 \ar[r] & 0 \ar[r] & t_0 / \Ker \partial_0 \ar[r] & t_{-1} \ar[r] & t_{-2} \ar[r] & \cdots
           }
\]
Each of $t_{(\geq 0)}$ and $t_{(<0)}$ is also finite dimensional over
$k$ which implies that they can be built in finitely many steps from
the DG module $k$; that is, they belong to $\langle k \rangle = \sT$.
Hence the distinguished triangle is in $\sT$, so $( \sT_{ ( \geq 0 ) }
, \sT_{ ( <0 ) } )$ is a t-structure in $\sT$ where
\begin{align}
\nonumber
  \sT_{ ( \geq 0 ) } & = \{\, t \in \sT 
  \,|\, \mbox{ $H_*(t)$ is in homological degrees $\geq 0$ } \,\}, \\
\label{equ:canonical_co-t-structure}
  \sT_{ ( < 0 ) } & = \{\, t \in \sT
  \,|\, \mbox{ $H_*(t)$ is in homological degrees $< 0$ } \,\}.
\end{align}
We refer to this t-structure as canonical.  Note that it was first
constructed, in higher generality, in \cite[thm.\ 1.3]{HKM}, and in
the DG case in \cite[lem.\ 2.2]{A} and \cite[lem.\ 5.2]{KY}.

It is easy to check that $\sT_{ ( \geq 0 ) }$ is the smallest
subcategory of $\sT$ which contains $X_0$ and is closed under
$\Sigma$, extensions, and direct summands.  Likewise, $\sT_{ ( < 0 )
}$ is the smallest subcategory of $\sT$ which contains $\Sigma^{-1}
X_0$ and is closed under $\Sigma^{-1}$, extensions, and direct
summands.

Now assume that $( \sX , \sY )$ is a non-trivial t-structure in $\sT$.
By Lemmas \ref{lem:stable} and \ref{lem:2}(iii) the heart $\sH$ is
non-zero so contains an indecomposable object $h$.  We have $\sT( h ,
\Sigma^{<0}h ) = 0$ by Lemma \ref{lem:2}(i).

If $t$ is an indecomposable object not on the base line of the AR
quiver then $\tau^{-1} t \in F^+(t)$; see Figure \ref{fig:5}.  Hence
$\sT( t , \tau^{-1} t ) \neq 0$ by Proposition \ref{pro:Hom_neq01},
and by Remark \ref{rmk:tau} this reads $\sT ( t , \Sigma^{-d}t ) \neq
0$.  Hence $h$ is forced to be on the base line of the AR quiver.
Suspending or desuspending the t-structure, we can assume $h = X_0$.

We have $h \in \sX$ and $h \in \Sigma\sY$ whence $\Sigma^{-1}h \in
\sY$.  That is, $X_0 \in \sX$ and $\Sigma^{-1} X_0 \in \sY$.

However, $\sX$ is closed under $\Sigma$, extensions, and direct
summands, and since $\sT_{ ( \geq 0 ) }$ is the smallest subcategory
of $\sT$ with these properties which contains $X_0$, we get $\sT_{ (
  \geq 0 ) } \subseteq \sX$.  Similarly, $\sT_{ ( < 0 )} \subseteq
\sY$.

By Lemma \ref{lem:3} this forces $( \sX , \sY ) = ( \sT_{ ( \geq 0 ) }
, \sT_{ ( < 0 ) } )$, and we have shown that as desired, up to
(de)suspension, any non-trivial t-structure in $\sT$ is the canonical
one.

\subsection{Proof of Theorem A for co-t-structures, $w \leq 0$}

$\,$ \medskip \newline In the proof for t-structures, $w \geq 2$, we
showed a canonical t-structure.  Tweaking the method slightly in the
present case produces a canonical co-t-structure.  Each object of
$\sT$ is still isomorphic to a DG module $t$ which is finite
dimensional over $k$.  Since $A$ is $k[T]$ with $T$ in homological
degree $d = w - 1$, and since $w \leq 0$ and $d \leq -1$, we have that
$A$ is a cochain DG algebra.  So there is a distinguished triangle
$t_{\leq 0} \rightarrow t \rightarrow t_{>0}$ in $\sD(A)$ where the
subscripts indicate hard truncations in the relevant homological
degrees.  Each of $t_{\leq 0}$ and $t_{>0}$ is also finite dimensional
over $k$ and is therefore in $\sT$.  Hence $( \sT_{ ( \leq 0 ) } ,
\sT_{ ( >0 ) } )$ is a co-t-structure in $\sT$ where
\begin{align*}
  \sT_{ ( \leq 0 ) } & = \{\, t \in \sT 
  \,|\, \mbox{ $H_*(t)$ is in homological degrees $\leq 0$ } \,\}, \\
  \sT_{ ( > 0 ) } & = \{\, t \in \sT
  \,|\, \mbox{ $H_*(t)$ is in homological degrees $> 0$ } \,\}.
\end{align*}
The rest of the proof is dual to the proof for t-structures, $w \geq
2$.

\subsection{Proof of Theorem A for co-t-structures, $w \geq 1$}

$\,$
\medskip
\newline
This is dual to the proof for t-structures, $w \leq -1$.

\medskip
\noindent
{\bf Acknowledgement.}
Holm and J\o rgensen were supported by the research priority programme
SPP 1388 {\em Darstellungstheorie} of the Deutsche
Forschungsgemeinschaft (DFG).  They gratefully acknowledge financial
support through the grant HO 1880/4-1.

We thank Changjian Fu for the observation reproduced at the end of the
introduction.

\end{document}